\numberwithin{equation}{section}
\theoremstyle{definition}
\newtheorem{thm}[equation]{Theorem} 
\theoremstyle{definition}
\newtheorem{prop}[equation]{Proposition} 
\newtheorem{cor}[equation]{Corollary}
\newtheorem{lemma}[equation]{Lemma}
\theoremstyle{definition}
\newtheorem{exmp}{Example}[section]
\newcommand{\pr}[1]{\left(#1\right)}
\newcommand{\bb}[1]{\mathbb{#1}}
\newcommand\numberthis{\addtocounter{equation}{1}\tag{\theequation}}
\newcommand{\D}{\Delta}
\newcommand{\n}{\nabla}
\newcommand{\sbst}{\subseteq}
\newcommand{\bd}{\partial}
\title{Ancient caloric functions and parabolic frequency on graphs}
\author{Tang-Kai Lee}
\author{Archana Mohandas}
\address{MIT, Dept. of Math., 77 Massachusetts Avenue, Cambridge, MA 02139-4307}
\email{tangkai@mit.edu and amohanda@mit.edu}
\date{\today}
\begin{document}
	\begin{abstract}
	We study ancient solutions to discrete heat equations on some weighted graphs.
    On a graph of the form of a product with $\bb Z,$ we show that there are no non-trivial ancient solutions with polynomial growth.
    This result is parallel to the case of finite graphs, which is also discussed.
    Along the way, we prove a backward uniqueness result for solutions with appropriate decaying rate based on a monotonicity formula of parabolic frequency.
	\end{abstract}
	\maketitle
\section{{\bf Introduction}}

Growth of solutions to partial differential equations is a central topic in geometric analysis.
Function theory on a geometric object reflects important geometric properties in many different fields.
The most important examples are solutions to Laplacian and heat equations. Ancient caloric functions, that is, solutions to the heat equation that are defined for all negative time, and harmonic functions of polynomial and exponential growth have been extensively studied both on manifolds and on graphs.
An incomplete list of references about the related problems are provided \cite{CM97, L97, H11, CM21, H22}.

The behavior of solutions to these partial differential equations changes drastically when we replace complete manifolds with domains in a Euclidean space.
For example, following the work of Yau \cite{Y75} and Cheng \cite{C80} on manifolds with non-negative Ricci curvature, harmonic functions on such manifolds have been studied extensively. 
In fact, it is known that many manifolds with non-negative Ricci curvature admit non-trivial harmonic functions of polynomial growth, and furthermore, the space of harmonic functions of a specific polynomial growth is finite dimensional (cf. \cite{LT89, L97, CM97, CM98, H20, H23}.
However, for domains in Euclidean spaces, there are examples that only admit non-trivial harmonic functions of exponential growth.
Hang-Lin \cite{HL} proved this for infinite strips (with the Dirichlet boundary condition) for a large class of elliptic equations, and this was generalized to the parabolic case by Gui \cite{G}.

In this note, we study the discrete analogue following Hang-Lin's and Gui's ideas.
Let $G_0$ be a weighted graph\footnote{In this note, all the graphs are assumed to be locally finite, connected, and undirected.}, and $G:=G_0\Box \bb Z$ be the Cartesian product of $G_0$ and $\bb Z$ with the standard graph structure.
Such a graph $G$ is called a {\bf strip-type graph}. 
(For the definitions of these terminologies, we refer the readers to Section \ref{sec:2} of \cite{Gr}.)
Then, we show that with an appropriate Dirichlet-type condition, there is no ancient caloric function of polynomial growth on $G.$

\begin{thm}\label{thm:main-strip}
    Let $G = (V, E, w)$ be a strip-type graph with the vertex set $V = V_0\times \bb Z$
    and let $W=W_0\times \bb Z\sbst V$ where $W_0$ is a finite proper non-empty subset of $V_0.$
    Suppose $u\colon V\times \bb R_{\le 0}\to \bb R$ is an ancient caloric function supported on $W;$ that is, it satisfies
    \begin{align}\label{HE}
    \begin{cases}
        \bd_t u(x,t) = \D u(x,t)
        + c(x) u(x,t)
        &\text{for }
        (x,t)\in W\times \bb R_{\le 0}
        \\
        u(x,t) = 0 
        &\text{for }
        (x,t)\in \pr{V\setminus W}\times \bb R_{\le 0}
    \end{cases}.
    \end{align}
    Then there exists $\varepsilon_0 = \varepsilon_0(G) > 0$ such that the following holds.
    If $|c|\le \varepsilon_0$ and $u$ is of polynomial growth, in the sense that
    \begin{align*}
        |u(x,t)| \le C\pr{1 + |d(x_0, x)| + \sqrt{-t}}^d
    \end{align*}
     for some $x_0\in V,$ and positive constants $C$ and $d,$ then $u$ vanishes on the whole $V.$
\end{thm}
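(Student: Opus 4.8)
The plan is to use finiteness of the transversal factor $W_0$ to reduce \eqref{HE} to a coupled system of caloric-type equations on $\bb Z$, and then to rule out nontrivial solutions by the parabolic frequency (weighted-energy) monotonicity: the Dirichlet spectral gap of $W_0$ forces the relevant weighted energy to blow up exponentially as $t\to-\infty$, while polynomial growth of $u$ allows it to grow at most polynomially.

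First I would perform the transversal spectral reduction. Let $L_0\succeq0$ be the Laplacian of $G_0$ with Dirichlet condition on $V_0\setminus W_0$; since $G_0$ is connected and $W_0$ is proper, $L_0$ is positive definite, with eigenvalues $0<\delta_0=\mu_1\le\cdots\le\mu_K$ ($K=|W_0|$) and an orthonormal eigenbasis $\phi_1,\dots,\phi_K$, and I record $\delta_0=\delta_0(G)>0$. By the product structure, on $W$ one has $\D=\D_{\bb Z}-L_0$ when acting on functions vanishing off $W$ (here $\D_{\bb Z}$ acts in the $\bb Z$-variable, $L_0$ in the $W_0$-variable); expanding $u(\cdot,n,t)=\sum_{k=1}^{K}a_k(n,t)\phi_k$ and projecting the first line of \eqref{HE} onto each $\phi_j$ gives
\[
\bd_t a=\D_{\bb Z}a-M(n)\,a\qquad\text{on }\bb Z\times\bb R_{\le0},
\]
where $a=(a_1,\dots,a_K)$, $\D_{\bb Z}$ acts componentwise, and $M(n)=\operatorname{diag}(\mu_1,\dots,\mu_K)-C(n)$ with $C(n)$ the symmetric matrix of multiplication by $c(\cdot,n)$ in the basis $\{\phi_k\}$, so $\|C(n)\|\le K\|c\|$. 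Taking $\varepsilon_0=\varepsilon_0(G)$ with $K\varepsilon_0\le\delta_0/2$ then makes $M(n)\succeq\tfrac{\delta_0}{2}\,\mathrm{Id}$ for every $n$.

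Next I would bring in a Gaussian weight and run the monotonicity. Finiteness of $W_0$ and polynomial growth of $u$ give $|a(n,t)|\le C_1(1+|n|+\sqrt{-t})^d$ on $\bb Z\times\bb R_{\le0}$. Let $p(n,\tau)$, $\tau>0$, be the heat kernel of the $\bb Z$-factor, so $\bd_\tau p=\D_{\bb Z}p$, $p>0$, $\sum_n p(n,\tau)=1$, and $\sum_n(1+|n|)^{2d}p(n,\tau)\le C_d(1+\tau)^d$; for $t<0$ set
\[
\Psi(t):=\sum_{n\in\bb Z}|a(n,t)|^2\,p(n,-t),
\]
which by the moment bound grows at most polynomially as $t\to-\infty$. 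Since $\bd_t[p(n,-t)]=-\D_{\bb Z}p(n,-t)$, differentiating $\Psi$ and summing by parts on $\bb Z$ yields
\[
\Psi'(t)=-\sum_n\bigl[p(n,-t)+p(n+1,-t)\bigr]\,|a(n+1,t)-a(n,t)|^2-2\sum_n a(n,t)\cdot M(n)\,a(n,t)\,p(n,-t),
\]
where both terms are $\le0$ and the second is $\le-\delta_0\Psi(t)$. Hence $\Psi'(t)\le-\delta_0\Psi(t)$, so $e^{\delta_0 t}\Psi(t)$ is non-increasing in $t$ and $\Psi(t)\ge e^{\delta_0(t_1-t)}\Psi(t_1)$ for all $t\le t_1<0$. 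If $\Psi(t_1)>0$ for some $t_1<0$, the right-hand side blows up exponentially as $t\to-\infty$, contradicting the polynomial bound; therefore $\Psi\equiv0$, so $a\equiv0$ and thus $u\equiv0$ on $V\times\bb R_{\le0}$. (This last argument is the degenerate form of the parabolic frequency monotonicity, equivalently the backward-uniqueness statement of the abstract, the small potential entering only through the harmless term $-2\sum_n a\cdot C(n)\,a\,p$ of size $\lesssim\varepsilon_0\Psi$.)

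The step I expect to be the main obstacle is the discrete energy identity displayed above. Unlike in the continuum the discrete Leibniz rule is not exact, so one must use a bona fide positive solution of the backward heat equation on the $\bb Z$-factor — the heat kernel — for the summation by parts to close and produce exactly $-2$ times a nonnegative weighted Dirichlet energy (plus the manifestly good potential term), with no uncontrolled remainder; this is precisely the content of the parabolic frequency monotonicity established beforehand. The remaining ingredients — positivity and Gaussian moment bounds for $p$, termwise differentiation of the series, and convergence throughout — are routine, following from the standard heat kernel estimates on $\bb Z$ and the polynomial growth of $a$.
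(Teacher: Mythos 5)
Your proposal is correct in its essentials, but it takes a genuinely different route from the paper. The paper never decomposes in the cross-sectional variable: it proves a reverse Poincar\'e (Caccioppoli-type) inequality $\int_{Q_r}u^2\le \frac{C}{(R-r)^2}\bigl(\int_{Q_R}u^2-\int_{Q_r}u^2\bigr)$ by pairing the equation with $u\varphi^2$ for a space-time cutoff $\varphi$, using Green's formula (Proposition \ref{prop:Green-infinite}) and the Dirichlet--Poincar\'e inequality on $W_0$ (Lemma \ref{lem:Poincare}, i.e.\ the same spectral gap you use) to absorb both the potential term and the Dirichlet energy; iterating this inequality forces $\int_{Q_r}u^2$ to grow exponentially in $r$, contradicting the polynomial growth bound, and the frequency/backward-uniqueness machinery is reserved for the finite-graph theorem, not for Theorem \ref{thm:main-strip}. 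You instead diagonalize the Dirichlet Laplacian on the finite cross-section $W_0$, reduce to a system $\partial_t a=\Delta_{\bb Z}a-M(n)a$ with $M\succeq\tfrac{\delta_0}{2}\,\mathrm{Id}$ once $|c|\le\varepsilon_0$, and run a Poon-type heat-kernel-weighted energy $\Psi(t)=\sum_n|a(n,t)|^2p(n,-t)$; your summation-by-parts identity does close exactly as you claim (choosing a positive solution of the backward heat equation as the weight is what kills the cross terms), giving $\Psi'\le-\delta_0\Psi$ and hence exponential growth of $\Psi$ as $t\to-\infty$, against the polynomial bound from the moment estimates for the heat kernel on $\bb Z$. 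What each approach buys: the paper's cutoff argument is elementary and local, needs no heat-kernel estimates, and its key proposition only uses the one-sided bound $c\le\varepsilon$; yours isolates the mechanism (the spectral gap $\delta_0$ of $W_0$) and yields a quantitative exponential rate in time, at the price of the two-sided smallness of $c$ (which the theorem assumes anyway) and the standard $\bb Z$ heat-kernel moment bounds. Two small corrections, neither fatal: with the paper's product normalization ($p=q=1$ in Section \ref{sec:2}), the Laplacian of $G_0\Box\bb Z$ acting on functions vanishing off $W$ is $\tfrac12\Delta_{\bb Z}-\tfrac12 L_0$, not $\Delta_{\bb Z}-L_0$, so your constants should be rescaled (use the kernel of the half-speed heat semigroup and the rate $\delta_0/2$); and the eigenbasis of $L_0$ should be taken orthonormal in the $\mu^0$-weighted inner product on $W_0$, so that $C(n)$ is symmetric with norm at most $\sup|c|$.
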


This can be regarded as a discrete analog of Gui's result \cite{G}.
As a corollary, Theorem~\ref{thm:main-strip} implies that such a strip-type proper subset $W$ does not admit any non-trivial harmonic functions of polynomial growth.
We remark that an assumption on $u$ is a Dirichlet-type boundary condition, and the assumption that $W_0$ is proper is necessary.
Indeed, in the following example where $W_0$ is not a proper subset of $V_0$, Theorem~\ref{thm:main-strip} does not hold.

\begin{exmp}
    Let $G = (V, E, w)$ be a weighted graph with its vertex set $V = \{v_0\} \times \bb Z,$ and its edge set $E = \{\pr{(v_0,i), (v_0, i+1)}\}$. 
    We assume all the edges have the same weight $1.$
    Let $u(x, t)$ be an ancient caloric function supported on $V$ where $x = ( v_0, i)$ for $i \in \mathbb{Z}$.
    Suppose $u(x, t)$ only depends on $x$ and hence $\partial_t u = 0$. 
    Then direct calculations imply that $u(v_0,i,t)=i$ is a (static) solution.
    Hence, the equation admits a solution with polynomial growth. 
\end{exmp}

There are many recent results concerning harmonic functions and ancient caloric functions on graphs (cf. \cite{K10, ST, HJL, HJ, MPTY, LS} or more references therein).
One such result by Hua \cite{H22} is that for graphs of polynomial volume growth; the dimension of the space of ancient solutions of polynomial growth is bounded by the growth degree and the dimension of harmonic functions with the same growth. 
In the present note, Theorem \ref{thm:main-strip} can be viewed as the first step to study the existence of harmonic functions of polynomial growth on infinite graphs.
In the finite case, the result can be proved by standard eigenvalue analysis.
We also deal with this finite case below.

\begin{thm}\label{thm:main-finite}
    Let $G = (V, E, w)$ be a finite graph.
    Suppose $u\colon V\times \bb R_{\le 0}\to \bb R$ is an ancient caloric function on $G;$ that is, it satisfies
    $$
    \bd_t u(x, t) = \D u(x,t)
    $$
    for all $(x,t)\in V\times\bb R_{\le 0}.$
    If $u$ is of polynomial growth, in the sense that
    \begin{align*}
        |u(x,t)| \le C\pr{1 + |d(x_0, x)| + \sqrt{-t}}^d
    \end{align*}
     for some $x_0\in V,$ and positive constants $C$ and $d,$ then $u$ is given by a harmonic function on $G$ and is constant in time.
\end{thm}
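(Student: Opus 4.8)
The plan is to use the finiteness of $G$ to turn \eqref{HE}'s finite-graph counterpart $\partial_t u = \Delta u$ into a linear autonomous ODE on the finite-dimensional space $\mathbb{R}^V$ and then run elementary spectral analysis. Equip $\mathbb{R}^V$ with the weighted inner product $\langle f,g\rangle = \sum_{x\in V}\mu(x)f(x)g(x)$, where $\mu$ is the vertex measure attached to the weights $w$; with the standard sign convention the Laplacian $\Delta\colon\mathbb{R}^V\to\mathbb{R}^V$ is self-adjoint and negative semi-definite, so there is an orthonormal basis $\phi_0,\dots,\phi_{n-1}$ of eigenfunctions with $\Delta\phi_i=\lambda_i\phi_i$ and $0=\lambda_0\ge\lambda_1\ge\dots\ge\lambda_{n-1}$. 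Since $G$ is connected, the eigenspace for $\lambda=0$ consists exactly of the constants, and more generally $\{\phi_i:\lambda_i=0\}$ spans the harmonic functions on $G$. Writing $a_i(t):=\langle u(\cdot,t),\phi_i\rangle$, the equation $\partial_t u=\Delta u$ decouples into $a_i'(t)=\lambda_i a_i(t)$ on $\mathbb{R}_{\le 0}$ (here one uses that $t\mapsto u(\cdot,t)$ is smooth, being a solution of the linear ODE $\tfrac{d}{dt}u(\cdot,t)=\Delta u(\cdot,t)$), so
\[
u(\cdot,t)=\sum_{i=0}^{n-1}a_i(0)\,e^{\lambda_i t}\,\phi_i, \qquad t\le 0.
\]

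Next I would feed in the polynomial growth hypothesis. Because $V$ is finite, $d(x_0,x)$ is bounded by the diameter of $G$, so the assumed bound on $u$ gives $\|u(\cdot,t)\|\le C'(1+\sqrt{-t})^d$ for a constant $C'$ depending only on $G,C,d$; by Cauchy--Schwarz, $|a_i(t)|\le\|u(\cdot,t)\|\,\|\phi_i\|\le C'(1+\sqrt{-t})^d$ for every $i$ and every $t\le 0$. Combining with $a_i(t)=a_i(0)e^{\lambda_i t}$: if $\lambda_i<0$, then $|a_i(0)|\,e^{\lambda_i t}\le C'(1+\sqrt{-t})^d$ for all $t\le 0$, and letting $t\to-\infty$ the left-hand side grows exponentially while the right-hand side grows only polynomially, forcing $a_i(0)=0$. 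Hence only the modes with $\lambda_i=0$ survive, $u(\cdot,t)=\sum_{i:\lambda_i=0}a_i(0)\phi_i$, which is independent of $t$ and satisfies $\Delta u=0$; i.e. $u$ is a time-independent harmonic function (in fact constant, $G$ being connected finite), as claimed.

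There is no real obstacle here — as the text itself notes, this is \textquotedblleft standard eigenvalue analysis\textquotedblright. The only points deserving care are: (i) setting up the spectral decomposition with the correct weighted inner product so that $\Delta$ is self-adjoint, and checking the sign convention so that nonzero eigenvalues are negative (this is precisely what makes ancient, as opposed to eternal future, modes grow); and (ii) the elementary but decisive observation that a bound polynomial in $\sqrt{-t}$ is incompatible, as $t\to-\infty$, with any nonzero exponential mode $e^{\lambda_i t}$ with $\lambda_i<0$. Finiteness of $G$ is used exactly twice: to obtain the spectral decomposition, and to absorb the spatial factor $d(x_0,x)$ of the growth bound into a constant.
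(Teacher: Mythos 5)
Your proof is correct, and it reaches the same conclusion in the same spirit (``standard eigenvalue analysis'' plus the incompatibility of $e^{\lambda t}$ with polynomial growth as $t\to-\infty$), but the implementation differs from the paper's in two ways worth noting. First, the paper does not diagonalize: it writes the Laplacian matrix $A$ in Jordan normal form, quotes Grigor'yan's bound that the spectrum lies in $[-2,0]$, and analyzes $e^{At}$ block by block, so that each entry is $e^{\lambda_i t}$ times a polynomial. You instead observe that $\Delta$ is self-adjoint and negative semi-definite for the $\mu$-weighted inner product (which follows from the paper's Green's formula, Proposition \ref{Green's theorem}), so an orthonormal eigenbasis exists and the equation decouples into scalar ODEs $a_i'=\lambda_i a_i$; this removes the nilpotent/Jordan-block bookkeeping entirely and also makes transparent why the zero modes are exactly the harmonic (hence, by connectedness, constant) functions. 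Second, the paper justifies the representation $u(\cdot,t)=e^{At}u(\cdot,0)$ for the ancient solution by invoking its backward uniqueness theorem (Theorem \ref{thm:backwards_uniqueness}, proved via parabolic frequency monotonicity), whereas your mode-by-mode argument gets the same identity from elementary uniqueness for linear constant-coefficient ODEs on $\mathbb{R}_{\le 0}$, so no frequency machinery is needed in the finite case. What the paper's route buys is robustness: the Jordan-form computation does not use self-adjointness and would survive non-symmetric zeroth-order perturbations, and citing Theorem \ref{thm:backwards_uniqueness} ties the finite case to the backward-uniqueness theme of the paper; what your route buys is a shorter, self-contained argument. The only hypotheses you should make explicit are the paper's standing convention that graphs are connected (used when you say the kernel of $\Delta$ consists of constants; without it one still gets ``harmonic and time-independent,'' which is all the theorem claims) and the trivial bootstrap that differentiability of $t\mapsto a_i(t)$ suffices to solve $a_i'=\lambda_i a_i$ exactly.
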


The proof of Theorem \ref{thm:main-finite} uses a backward uniqueness result,
which holds on an infinite graph when the solution has a growth rate bound.
This may be of independent interest in the study of the heat equation on a graph.

\begin{thm} \label{thm:backwards_uniqueness}
    Let $G = (V, E, w)$ be a weighted graph and $u\colon V\times[a,b]\to\bb R$ be a function with $u,\partial_t u\in W^{1,2}(G)$  when restricted to each time slice. 
    Suppose $\bd_t u = \D u + c(x) u (x, t)$ for some bounded $c\colon V\to \bb R.$ 
    If $u(\cdot,b)\equiv 0,$ then $u\equiv 0.$
\end{thm}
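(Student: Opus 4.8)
The plan is to prove backward uniqueness by a parabolic frequency (logarithmic convexity) argument, the discrete analogue of the Poon--Agmon--Nirenberg method underlying the monotonicity formula used elsewhere in this note. Let $m$ denote the vertex measure of $G$ and, for $t\in[a,b]$, set
\[
H(t) := \sum_{x\in V} m(x)\, u(x,t)^2,
\qquad
D(t) := \sum_{x\in V} m(x)\pr{\abs{\n u(x,t)}^2 - c(x)\, u(x,t)^2},
\]
where $\sum_x m(x)\abs{\n u}^2$ is the Dirichlet energy of the time slice; both are finite since $u(\cdot,t)\in W^{1,2}(G)$ and $c$ is bounded. First I would test $\bd_t u=\D u+c u$ against $u$ and sum by parts on $G$ to get $H'(t)=-2D(t)$; since $\D u=\bd_t u-cu\in\ell^2$, this also rewrites as $D(t)=-\sum_x m(x)\,u\,\bd_t u$. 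Differentiating $D$ in $t$ termwise and using bilinearity of the Dirichlet form gives $D'(t)=2\sum_x m(x)\,\n u\cdot\n\bd_t u-2\sum_x m(x)\,c\,u\,\bd_t u$, and summing by parts once more (legitimate because $\bd_t u\in W^{1,2}(G)$ and $\D u\in\ell^2$) yields $D'(t)=-2\sum_x m(x)\,(\bd_t u)^2$.

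Next I would introduce the frequency $N(t):=D(t)/H(t)$, defined on the relatively open set $\{t:H(t)>0\}$. Combining the identities above,
\[
N'(t)=\frac{D'(t)H(t)+2D(t)^2}{H(t)^2}=\frac{2}{H(t)^2}\pr{\pr{\sum_x m(x)\,u\,\bd_t u}^2-\pr{\sum_x m(x)\,u^2}\pr{\sum_x m(x)\,(\bd_t u)^2}}\le 0
\]
by the Cauchy--Schwarz inequality, so $N$ is non-increasing on any interval where $H>0$. Since $(\log H)'(t)=H'(t)/H(t)=-2N(t)$, it follows that $(\log H)''(t)=-2N'(t)\ge 0$; that is, $\log H$ is convex on every interval on which $H$ is positive.

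To finish, I would argue by contradiction. If $u\not\equiv 0$ then $H(t_0)>0$ for some $t_0\in[a,b)$; put $t^*:=\sup\{\,t\ge t_0:H>0\text{ on }[t_0,t)\,\}$, which by continuity of $H$ in $t$ satisfies $t^*>t_0$. In either case --- $t^*=b$, where $H(t^*)=H(b)=0$ since $u(\cdot,b)\equiv 0$, or $t^*<b$, where $H(t^*)=0$ by maximality of $t^*$ --- we have $H(t^*)=0$, and hence $\log H(s)\to-\infty$ as $s\uparrow t^*$ by continuity of $H$. But $\log H$ is convex on the half-open interval $[t_0,t^*)$, and a convex function there cannot tend to $-\infty$ at the finite right endpoint: taking $t_0<s<r<t^*$ in the slope inequality $\frac{\log H(s)-\log H(t_0)}{s-t_0}\le\frac{\log H(r)-\log H(s)}{r-s}$ and letting $r\uparrow t^*$ forces the fixed left-hand side to be $-\infty$. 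This contradiction shows $H\equiv 0$, i.e. $u\equiv 0$.

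I expect the main obstacle to be not this soft part but the analytic bookkeeping on an infinite graph: justifying the termwise differentiation of $H$ and $D$ in $t$, the continuity of these quantities, and the two summation-by-parts identities $\sum_x m(x)\,\n f\cdot\n g=-\sum_x m(x)\,(\D f)\,g$. This is precisely where the hypotheses $u,\bd_t u\in W^{1,2}(G)$ enter: together with the membership $\D u=\bd_t u-cu\in\ell^2$ they make all the series absolutely convergent, and, via the weak formulation (testing the equation against finitely supported functions and passing to the limit using the $W^{1,2}$ bounds on each slice, noting also that $G$ being locally finite makes $u(x,\cdot)$ an honest $C^\infty$ function of $t$ for each fixed $x$), they legitimize all the manipulations above. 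Once these points are settled, the frequency monotonicity and the convexity contradiction go through as stated.
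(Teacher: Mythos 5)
Your proposal is correct and lives in the same parabolic-frequency circle of ideas as the paper, but the implementation is genuinely different. The paper keeps $D(t)=-\sum_{x}|\n u|^2\mu(x)$ free of the potential, treats $cu$ (in fact any perturbation with $|(\bd_t-\D)u|\le C(|u|+|\n u|)$) as an error term, and proves the differential inequality $U'\ge 2C^2(U-1)$ for $U=D/I$ (Theorem \ref{U'}); integrating this gives the quantitative Harnack-type bound $I(b)\ge I(a)e^{(b-a)((1+K)(U(a)-1)e^{2(b-a)K^2}-3K)}$ (Corollary \ref{unique}), from which $I(b)=0$ forces $I(a)=0$. You instead absorb the zeroth-order term into the energy, $D=\sum_x\mu(x)\pr{|\n u|^2-cu^2}=-\sum_x\mu(x)\,u\,\bd_t u$, exploit that $\D+c$ is self-adjoint and time-independent to get the exact identities $H'=-2D$, $D'=-2\sum_x\mu(x)(\bd_t u)^2$, hence exact monotonicity $N'\le 0$ and log-convexity of $H$, and finish with the soft observation that a finite convex function on $[t_0,t^*)$ cannot tend to $-\infty$ at the endpoint where $H$ vanishes. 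What your route buys is a cleaner, equality-based argument with no constants to track; what the paper's route buys is generality (the perturbation bound covers gradient terms and time-dependent coefficients, where your exact cancellations in $D'$ fail) and a quantitative two-time estimate rather than a purely qualitative uniqueness statement. Two small caveats: your identity $H'=-2D$ uses the normalization $\sum_x\mu(x)|\n u|^2=-\sum_x\mu(x)\,u\,\D u$ (i.e.\ the Dirichlet form with the factor $\tfrac12$ on ordered pairs), which differs by a factor of $2$ from the paper's $|\n u|^2$; this is harmless since the Cauchy--Schwarz step only needs $D=-\sum\mu\,u\,\bd_t u$, but the constants should be stated consistently. And the termwise differentiation of $H$, $D$ and the summation by parts on the infinite graph are asserted rather than proved (one really wants some uniformity in $t$ of the $\ell^2_\mu$ bounds, not just slice-wise membership), though the paper's own proof of Theorem \ref{U'} takes exactly the same liberties, so this is not a gap relative to the paper's standard of rigor.
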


The way we prove this backward uniqueness is via the analysis of the {\bf parabolic frequency}.
The concept of frequency was employed to study the growth rate of the solution to elliptic and parabolic partial differential equations on Euclidean spaces by Almgren \cite{A} and Poon \cite{Poon} (cf. \cite{GL86}). 
Recently, Colding-Minicozzi \cite{CM22} studied a parabolic frequency on general manifolds. 
They proved a monotonicity formula of the frequency and used it to derive uniqueness properties for the heat equation on manifolds. 
The method was then explored in the context of geometric flows \cite{BK, BHL}.
In this note, we follow their ideas and investigate a parabolic frequency on general weighted graphs.
Accordingly, we obtain the backward uniqueness and the unique continuation property at infinity for the heat equation on graphs.
We remark that the uniqueness properties mentioned above do not hold for the heat equation with discrete time on weighted graphs. 
It is straightforward to construct a non-trivial solution, for example, on $K_2,$ such that it vanishes after finite time.

Although we only record these uniqueness properties for the solution to the heat equation, we should mention that the same properties hold for more general operators, since we can derive the same type of estimate for the Dirichlet energy under the assumption of theorem a stronger assumption (cf. Theorem~\ref{thm:main-freq-refined}).  
Finally, we remark that the uniqueness properties mentioned above do not hold for the heat equation with discrete time on weighted graphs. 
It is easy to construct a non-trivial solution, for example on $K_2,$ such that it vanishes after finite time.

\subsection*{\bf Acknowledgment}
The authors are grateful to Prof. Bill Minicozzi for helpful comments and to Prof. Toby Colding for his inspiring topic course on heat equations.
The paper was mainly based on the work completed in summer 2023.
During the project, Lee was partially supported by NSF Grant DMS 2005345.

\section{\bf Preliminary}\label{sec:2}
We start by introducing the terminologies about a graph.
A weighted graph $G$ consists of a vertex set $V,$ an edge set $E,$ and a weight function $w.$
The edge set is a subset of $V\times V,$ and the weight function $w$ is a function on $E$ such that $w_{xy}:= w(x,y)>0.$
For $x,y \in V,$ we say $x$ is adjacent to $y$ if $(x,y)$ and $(y,x)$ are in $E,$ denoted as $x\sim y.$

In this note, all the graphs are undirected, locally finite, and connected.
That means, $(x,y)\in E$ if and only if $(y,x)\in E$ and $w_{xy}=w_{yx}$ for $(x,y)\in E;$ for any $x\in V,$ the set $\{y\in V: y\sim x\}$ is finite; given any $x,y\in V,$ we can find $v_2,\cdots, v_{n-1}\in V$ such that $x\sim v_1\sim\cdots\sim v_{n-1}\sim y.$
For any two points $x$ and $y$ in such a graph, we define the distance between them to be
\begin{align*}
    d(x,y)
    := \begin{cases}
        0& \text{if }x=y\\
        1& \text{if }x\sim y\\
        \inf\{
    n+2:
    x\sim v_0\sim v_1\sim\cdots\sim v_{n}\sim y
    \text{ for some }v_i\in V
    \}
    &\text{otherwise}
    \end{cases}.
\end{align*}
Using this, we define the diameter of a non-trivial subset $W$ of $V$ to be
\begin{align*}
    {\rm diam} W
    := \sup\{
    d(x,y): x,y\in W
    \}.
\end{align*}
This is allowed to be infinite, and is a finite number when $W$ is a finite set.

For any $x\in V,$ the vertex weight of $x$ is 
$
\mu(x) := \sum_{y\sim x} w_{xy}.
$
For a function $f$ on $V,$ the Laplacian operator is defined as
\[
\Delta f(x) := \sum_{y \sim x}\frac{w_{xy}}{\mu(x)}(f(y)-f(x)),
\]
for any $x\in V,$ and the difference operator $\n_{xy}$ is defined as
\begin{align*}
    \nabla_{xy}f = 
    \begin{cases}
        f(y) - f(x)&\text{if }y\sim x\\
        0&\text{otherwise}
    \end{cases}
\end{align*}
for $x,y\in V.$
A function $u(x,t)$ defined on $V \times \mathbb{R}$ satisfies the heat equation if 
\[
    \frac{\partial}{\partial t} u(x, t) = \Delta u(x, t). 
\]
For the remainder of the paper (especially in Section~\ref{sec:3}), every instance of $u(x)$ is also a function of $t,$ i.e. $u(x) = u(x,t)$, unless otherwise specified.

We will consider product graphs which are defined as follows.  
Let $G_1 = (V_1, E_1, w^1)$ and $G_2 = (V_2, E_2,  w^2)$ be two weighted graphs.
Given positive numbers $p$ and $q,$ the weighted Cartesian product $G_1\Box_{p,q} G_2$ is defined to be the weighted graph $(V, E, w)$ with the following properties.
The set of vertices is $V := V_1 \times V_2.$
For $(x_1, x_2), (y_1, y_2)\in V,$ they form an edge if and only if 
\begin{align*}
\text{either }
x_1 = y_1\text{ and }x_2\sim y_2, \text{ or }
x_1\sim y_1\text{ and }x_2 = y_2.
\end{align*}
Specifically, the weight is defined by
\begin{align*}
w_{(x_1, x_2)(y_1, y_2)}
:=\begin{cases}
p\cdot \mu^1(x_1)\cdot w_{x_2 y_2}
&\text{if }x_1 = y_1\\
q\cdot \mu^2(x_2)\cdot w_{x_1 y_1}
&\text{if }x_2 = y_2\\
0
&\text{otherwise}
\end{cases}
\end{align*}
according to \cite{Gr}, where $\mu^i$ is the vertex weight of $G_i$ for $i=1,2.$
This implies that the vertex weight $\mu$ of $G$ satisfies
\begin{align}\label{weight_function}
\mu(x_1, x_2) = (p + q) \mu^1(x_1) \mu^2(x_2).
\end{align}
For the remainder of the paper, we let $p = q = 1$.
The conclusion is still true for general $p$ and $q.$

In the proof of the reverse Poincar\'e inequality in Section \ref{sec:3}, we will use Green's formula on graphs, as proved in \cite[Theorem 2.1]{Gr}.

\begin{prop}\label{Green's theorem}
    (Green's formula) Let $G = (V, E, w)$ be a weighted graph, and let $\Omega$ be a non-empty finite subset of $V$. Then, for any two functions $f, g$ on $V$,
    \begin{align} \label{Green's formula}
        \sum_{x \in \Omega} \Delta f(x)g(x)\mu(x) = -\frac{1}{2}\sum_{x, y\in \Omega}(\nabla_{xy}f)(\nabla_{xy}g)w_{xy} 
        + \sum_{x \in \Omega}\sum_{y \in \Omega^c} (\nabla_{xy}f)g(x)w_{xy}.
    \end{align}
    If $V$ is finite and $\Omega = V$, then $\Omega^c$ is empty so that the "boundary" term in \eqref{Green's formula} vanishes, and we obtain 
    \begin{align*}
        \sum_{x \in V}\Delta f(x)g(x)\mu(x) & = -\frac{1}{2}\sum_{x,y \in V} (\nabla_{xy}f)(\nabla_{xy}g)\mu_{xy}. 
    \end{align*}
\end{prop}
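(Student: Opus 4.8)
The plan is to prove the identity \eqref{Green's formula} by a direct summation-by-parts computation. The key structural facts I would use are that $\Omega$ is finite and $G$ is locally finite — so every sum that appears is a finite sum and may be rearranged freely — together with the symmetry $w_{xy} = w_{yx}$ of an undirected graph and the antisymmetry $\nabla_{yx} f = -\nabla_{xy} f$ of the difference operator.

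First I would unfold the definition of the Laplacian. For $x \in \Omega$,
\[
\Delta f(x)\, g(x)\, \mu(x) = \sum_{y \sim x} w_{xy}\, \bigl(f(y) - f(x)\bigr)\, g(x) = \sum_{y \sim x} w_{xy}\,(\nabla_{xy} f)\, g(x),
\]
which, under the convention $w_{xy} = 0$ for $y \not\sim x$, equals $\sum_{y \in V} w_{xy}\,(\nabla_{xy} f)\, g(x)$. Summing over $x \in \Omega$ and partitioning the inner index set as $V = \Omega \sqcup \Omega^{c}$ then writes the left-hand side of \eqref{Green's formula} as the sum of an ``interior'' term $\sum_{x, y \in \Omega} w_{xy}\,(\nabla_{xy} f)\, g(x)$ and a ``boundary'' term $\sum_{x \in \Omega} \sum_{y \in \Omega^{c}} w_{xy}\,(\nabla_{xy} f)\, g(x)$; the latter is already exactly the boundary term appearing in \eqref{Green's formula}.

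It remains to antisymmetrize the interior term. Relabeling the summation indices $x \leftrightarrow y$ and using $w_{xy} = w_{yx}$ and $\nabla_{yx} f = -\nabla_{xy} f$ gives $\sum_{x, y \in \Omega} w_{xy}(\nabla_{xy} f) g(x) = -\sum_{x, y \in \Omega} w_{xy}(\nabla_{xy} f) g(y)$; averaging these two expressions replaces $g(x)$ by $\tfrac12\bigl(g(x) - g(y)\bigr) = -\tfrac12 \nabla_{xy} g$, so the interior term equals $-\tfrac12 \sum_{x, y \in \Omega} (\nabla_{xy} f)(\nabla_{xy} g)\, w_{xy}$. Combining with the previous step yields \eqref{Green's formula}, and the closing assertion is immediate: when $V$ is finite and $\Omega = V$ the complement $\Omega^{c}$ is empty, so the boundary term vanishes identically. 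I do not expect a genuine obstacle here — the statement is a finite-dimensional integration-by-parts identity — so the only thing to be careful about is the bookkeeping: that passing from sums over neighbors to sums over $\Omega$ (or over $V$) is harmless, and that the index relabeling in the antisymmetrization step is applied to a genuinely finite sum. One can, if preferred, avoid the convention $w_{xy} = 0$ for non-edges by keeping the restricted sums $\sum_{y \sim x}$ and $\sum_{\substack{x, y \in \Omega \\ x \sim y}}$ throughout, and the argument runs verbatim.
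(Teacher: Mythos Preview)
Your argument is correct. The paper does not actually prove Proposition~\ref{Green's theorem} itself (it cites \cite[Theorem~2.1]{Gr}), but it does prove the companion Proposition~\ref{prop:Green-infinite} by exactly the same expand--split--symmetrize computation you outline, so your approach coincides with the paper's.
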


We will also need a different version of Green's formula when $\Omega$ is an infinite subset but one of the function has compact support.

\begin{prop}\label{prop:Green-infinite}
    Let $G = (V, E, w)$ be a weighted graph, and let $\Omega$ be a non-empty subset of $V$. Then, for any function $f$ on $V$ and any function $g$ finitely supported on $V$,
    \begin{align} \label{Greens-formula-infinite}
        \sum_{x \in \Omega} \Delta f(x)g(x)\mu(x) = -\frac{1}{2}\sum_{x, y\in \Omega}(\nabla_{xy}f)(\nabla_{xy}g)w_{xy} 
        + \sum_{x \in \Omega}\sum_{y \in \Omega^c} (\nabla_{xy}f)g(x)w_{xy}.
    \end{align}
\end{prop}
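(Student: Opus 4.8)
The plan is to show that the finite-support hypothesis on $g$ forces every sum appearing in \eqref{Greens-formula-infinite} to reduce to a genuinely finite sum, and then to run \emph{verbatim} the summation-by-parts computation that establishes Proposition~\ref{Green's theorem}. Write $K:=\supp g$, and let $\td K:=K\cup\{y\in V:y\sim x\text{ for some }x\in K\}$ be its closed neighbourhood; since $G$ is locally finite and $K$ is finite, $\td K$ is finite.

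First I would verify absolute convergence of each term, which is what legitimises the rearrangements below. On the left-hand side, $g(x)=0$ for $x\notin K$, so the sum is really over the finite set $\Omega\cap K$. In the Dirichlet-energy term, $(\n_{xy}f)(\n_{xy}g)w_{xy}$ vanishes unless $x\sim y$ and $g(x)\neq g(y)$, hence unless $x,y\in\td K$, so that sum is finite. In the boundary term, $g(x)=0$ unless $x\in K$, and for each such $x$ local finiteness leaves only finitely many neighbours $y\in\Omega^c$, so this sum is finite too. (As in Proposition~\ref{Green's theorem}, the double sums here are tacitly over pairs forming an edge, the remaining terms being zero.)

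Next I would carry out the computation. Expanding the Laplacian and then splitting the neighbour sum according to whether $y\in\Omega$ or $y\in\Omega^c$,
\[
\sum_{x\in\Omega}\D f(x)g(x)\mu(x)=\sum_{x\in\Omega}\sum_{y\sim x}w_{xy}(\n_{xy}f)g(x)=\sum_{x,y\in\Omega}w_{xy}(\n_{xy}f)g(x)+\sum_{x\in\Omega}\sum_{y\in\Omega^c}w_{xy}(\n_{xy}f)g(x).
\]
In the first sum on the right, relabelling the dummy variables $x\leftrightarrow y$ and using $w_{yx}=w_{xy}$ and $\n_{yx}f=-\n_{xy}f$ for $x\sim y$ gives $\sum_{x,y\in\Omega}w_{xy}(\n_{xy}f)g(x)=-\sum_{x,y\in\Omega}w_{xy}(\n_{xy}f)g(y)$, so averaging the two expressions yields
\[
\sum_{x,y\in\Omega}w_{xy}(\n_{xy}f)g(x)=-\tfrac12\sum_{x,y\in\Omega}w_{xy}(\n_{xy}f)(\n_{xy}g).
\]
Substituting this back into the previous display gives exactly \eqref{Greens-formula-infinite}.

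I expect no real obstacle: the statement is essentially the observation that finite support of $g$ is precisely what is needed to push the formal manipulation in the proof of Proposition~\ref{Green's theorem} through when $\Omega$ is infinite. The only point demanding attention is the bookkeeping of supports in the second paragraph, which is what justifies interchanging the orders of summation and relabelling the indices; everything else is the identical algebra. (Alternatively one could exhaust $\Omega$ by finite subsets and pass to a limit using Proposition~\ref{Green's theorem}, but since all the sums are already finite this detour is unnecessary.)
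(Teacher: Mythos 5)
Your argument is correct and is essentially the paper's own proof: expand the Laplacian, split the neighbour sum into $y\in\Omega$ and $y\in\Omega^c$, and symmetrize by swapping $x$ and $y$ (equivalently, averaging the two expressions) to produce the $-\tfrac12$ Dirichlet term, with finite support of $g$ guaranteeing convergence. Your bookkeeping via the closed neighbourhood of $\supp g$ is just a slightly more explicit version of the paper's remark that both sums converge because $g$ is compactly supported.
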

\begin{proof}
Note that
    \begin{align}  \label{eq:sum_over_omega}
        \sum_{x \in \Omega} \Delta f(x)g(x)\mu(x) & = \sum_{x \in \Omega} \left(\frac{1}{\mu(x)}\sum_{y \in V}(f(y)-f(x))\mu_{xy}\right) g(x) \mu(x) \nonumber\\
        & = \sum_{x \in \Omega} \sum_{y \in V}(f(y)-f(x))g(x) \mu_{xy} \nonumber \\
        & = \sum_{x \in \Omega} \sum_{y \in \Omega} (f(y)-f(x))g(x) \mu_{xy} + \sum_{x \in \Omega} \sum_{y \in \Omega^c} (f(y)-f(x))g(x) \mu_{xy}.
    \end{align}
    Note that since $g$ is compactly supported on $\Omega$, both terms on the right side of \eqref{eq:sum_over_omega} converge. We then obtain,
    \begin{align} \label{eq:greens_switch}
        \sum_{x \in \Omega} \Delta f(x)g(x)\mu(x) &= \sum_{y \in \Omega} \sum_{x \in \Omega} (f(x)-f(y))g(y) \mu_{xy} + \sum_{x \in \Omega} \sum_{y \in \Omega^c} (\nabla_{xy}f)g(x) \mu_{xy}.
    \end{align}
    where in \eqref{eq:greens_switch} we have switched $x$ and $y$. Adding together the identities \eqref{eq:greens_switch} and \eqref{eq:sum_over_omega}, we have
    \begin{align}
        \sum_{x \in \Omega} \Delta f(x)g(x)\mu(x) &= \frac{1}{2}\sum_{x,y \in \Omega} (f(y)-f(x))(g(x)-g(y))\mu_{xy} \nonumber \\ 
        &+ \sum_{x \in \Omega}\sum_{y \in \Omega^c}(\nabla_{xy}f)g(x)\mu_{xy} \nonumber \\
        &= -\frac{1}{2}\sum_{x, y\in \Omega}(\nabla_{xy}f)(\nabla_{xy}g)w_{xy} 
        + \sum_{x \in \Omega}\sum_{y \in \Omega^c} (\nabla_{xy}f)g(x)w_{xy}. \nonumber
    \end{align}
\end{proof}

\section{\bf Ancient caloric functions on strip-type graphs}
\label{sec:3}
\subsection{Reverse Poincar\'e inequality} 
We will need a Poincar\'e-type inequality on a graph in the proof of the reverse Poincar\'e inequality.
This is a direct consequence of the estimate of the Dirichlet eigenvalue.

\begin{lemma}\label{lem:Poincare}
    Let $G = (V, E, w)$ be a locally finite connected weighted graph, and $W$ be a finite proper subset of $V.$
    Then, there exists $C = C(W)>0$ such that for any function $f\colon V\to\bb R$ supported on $W,$ we have
    \begin{align*}
        \sum_{x\in V} f^2(x) \mu(x)
        \le C \sum_{x, y \in V} |\n_{xy} f|^2 w_{xy}.
    \end{align*}
\end{lemma}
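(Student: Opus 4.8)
The plan is to reduce the claim to the positivity of the first Dirichlet eigenvalue of the Laplacian on $W$. Since $W$ is a \emph{finite} subset of $V$, the space of functions supported on $W$ is finite dimensional, and the Rayleigh quotient
\[
\lambda_1(W) := \inf\left\{ \frac{\tfrac12\sum_{x,y\in V}|\n_{xy}f|^2 w_{xy}}{\sum_{x\in V} f^2(x)\mu(x)} : f \text{ supported on } W,\ f\not\equiv 0 \right\}
\]
attains its infimum at some minimizer $f_0$, by compactness of the unit sphere in this finite-dimensional space (the denominator is a positive-definite quadratic form in the finitely many values $\{f(x)\}_{x\in W}$, so it is bounded below away from $0$ on that sphere, and the numerator is continuous). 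Thus it suffices to show $\lambda_1(W) > 0$, and then take $C = (2\lambda_1(W))^{-1}$, absorbing the factor $\tfrac12$ into the constant so that the stated form $\sum f^2\mu \le C\sum|\n f|^2 w$ holds.

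The key point is therefore strict positivity: $\lambda_1(W) \ne 0$. Suppose for contradiction that the minimizer $f_0$ has $\sum_{x,y\in V}|\n_{xy}f_0|^2 w_{xy} = 0$. Since all weights $w_{xy}$ are positive, this forces $\n_{xy}f_0 = 0$ for every edge $(x,y)\in E$, i.e. $f_0$ is constant along edges, hence constant on each connected component of $G$; since $G$ is connected, $f_0$ is globally constant. But $f_0$ is supported on $W$, and $W$ is a \emph{proper} subset of $V$, so there is a vertex $z\in V\setminus W$ with $f_0(z) = 0$, whence the constant is $0$ and $f_0\equiv 0$, contradicting $f_0\not\equiv 0$. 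This is where the hypotheses that $W$ be proper and $G$ be connected are both used; the finiteness of $W$ is what guarantees the infimum is attained, so that vanishing of the numerator at a genuine minimizer can be invoked.

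The main (minor) obstacle is purely bookkeeping: one must make sure the sum $\sum_{x,y\in V}|\n_{xy}f|^2 w_{xy}$ is finite for $f$ supported on the finite set $W$ — this is immediate since $G$ is locally finite, so only finitely many edges touch $W$ and all other terms vanish — and one must be careful that the constant $C = C(W)$ genuinely depends only on $W$ (through $\mu|_W$ and the weights of edges incident to $W$), which it does since $\lambda_1(W)$ is determined by those data alone. An alternative, even more elementary route avoiding eigenvalue language: fix any vertex $z\in V\setminus W$; for $x\in W$ choose a path from $z$ to $x$ and telescope $f(x) = f(x) - f(z) = \sum (\text{increments } \n f \text{ along the path})$, then apply Cauchy–Schwarz to bound $f(x)^2$ by a path-length times a sum of $|\n_{xy}f|^2$ over edges, and finally sum over $x\in W$; since $W$ is finite one can use a fixed finite collection of paths, yielding an explicit $C$. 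Either argument completes the proof.
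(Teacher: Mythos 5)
Your argument is correct and follows essentially the same route as the paper: both reduce the inequality to the strict positivity of the first Dirichlet eigenvalue, i.e.\ of the Rayleigh quotient over functions supported on $W$. The only difference is that the paper simply cites \cite[Theorem 4.5]{Gr} for this positivity, whereas you prove it directly (attainment of the infimum by finite-dimensional compactness, then connectedness of $G$ and properness of $W$ forcing a zero-energy minimizer to vanish), which is a valid self-contained substitute for the citation.
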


Note that Lemma \ref{lem:Poincare} does not hold on single vertex graphs. 
In the lemma, $W$ must be a finite proper subset of the vertex set $V$. 

\begin{proof}
    By \cite[Theorem 4.5]{Gr}, the first Dirichlet eigenvalue 
    \begin{align*}
        \inf_{f} \frac{\sum_{x,y \in V} |\n_{xy} f|^2 w_{xy}}
        {\sum_{x\in V} f^2(x) \mu(x)}
    \end{align*}
    is positive\footnote{Note that in \cite{Gr}, $G$ is assumed to be infinite, which is used to make sure that $V\setminus W$ is non-empty. We already include this in the assumption of the lemma.}, where the infimum is taken over all functions $f\colon V\to\bb R$ supported on $W.$
    This positivity implies the lemma.
\end{proof}

Let $G_0 = (V_0, E_0, w_0)$ be a locally finite connected weighted graph,
and $G := G_0 \Box \bb Z$ be the product of $G_0$ and the standard graph $(\bb Z, E_e),$
where for $i,j\in \bb Z,$ $(i,j)\in E_e$ if and only if $|i-j|=1.$
For $r\in\bb N,$ let 
$D_r := V_0 \times \pr{[-r, r]\cap \bb Z}.$
For a function $w$ defined on $Q_r := [-r^2, 0] \times D_r$ for some $r\in\bb N,$ we write 
\begin{align}\label{integral_notation}
    \int_{Q_r} w
    := \int_{-r^2}^0 \sum_{x\in D_r} u(x, t) \mu(x)
    dt.
\end{align}

\begin{prop}
    There exists $\varepsilon>0$ such that the following holds.
    Suppose $c\colon V\to\bb R$ is a function satisfying $c\le \varepsilon.$ 
    If $u\colon V\times \bb R_{\le 0}\to \bb R$ is a solution to \eqref{HE}, then for integers $R > r > 0,$ we have
    \begin{align*}
        \int_{Q_r} u^2
        \le \frac{C}{(R-r)^2}
        \pr{
        \int_{Q_R} u^2 
        - \int_{Q_r} u^2
        }.
    \end{align*}
\end{prop}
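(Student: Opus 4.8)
The plan is to run a parabolic Caccioppoli (``hole-filling'') argument: I would test the equation \eqref{HE} against $u\phi^2\psi$, integrate in space and time, integrate by parts, and then convert the resulting Dirichlet energy of $u\phi$ back into an $L^2$-quantity by the Poincar\'e inequality of Lemma~\ref{lem:Poincare}, the zeroth order term $cu$ being absorbed by taking $\varepsilon$ small. Here $\phi(x_1,i):=g(i)$ depends only on the $\bb Z$-coordinate, with $g\colon\bb Z\to[0,1]$ equal to $1$ on $[-r,r]\cap\bb Z$, equal to $0$ off $[-R,R]\cap\bb Z$, and piecewise linear in between, so that $|g(i)-g(i\pm1)|\le(R-r)^{-1}$; and $\psi\colon\bb R\to[0,1]$ is a temporal cutoff equal to $1$ on $[-r^2,0]$, vanishing on $(-\infty,-R^2]$, nondecreasing, with $0\le\psi'\le(R^2-r^2)^{-1}\le(R-r)^{-2}$. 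Since a solution to \eqref{HE} vanishes off $W_0\times\bb Z$ and $W_0$ is finite, for each fixed $t$ the function $u(\cdot,t)\phi$ is finitely supported; hence all the spatial sums below are finite, everything in sight is finite, and Green's formula (Proposition~\ref{prop:Green-infinite} with $\Omega=V$) applies with no boundary term.

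Multiplying $\bd_t u=\D u+cu$ by $u\phi^2\psi\,\mu(x)$, summing over $x\in V$, and integrating $t$ over $[-R^2,0]$, I would treat the three terms as follows. The time term is $\sum_x(\bd_t u)u\phi^2\psi\,\mu=\tfrac12\bd_t\bigl(\sum_x u^2\phi^2\mu\bigr)\psi$, whose time-integral equals $\tfrac12\sum_x u^2(x,0)\phi^2(x)\mu(x)-\tfrac12\int\bigl(\sum_x u^2\phi^2\mu\bigr)\psi'\,dt$, and the first summand is $\ge0$. For the Laplacian term, Green's formula together with the pointwise identity
\begin{align*}
(\n_{xy}u)\bigl(\n_{xy}(u\phi^2)\bigr)=\bigl(\n_{xy}(u\phi)\bigr)^2-u(x)u(y)(\n_{xy}\phi)^2,
\end{align*}
which one verifies by expanding, gives $\sum_x(\D u)u\phi^2\psi\,\mu=-\tfrac{\psi}2\sum_{x,y}\bigl(\n_{xy}(u\phi)\bigr)^2 w_{xy}+\tfrac{\psi}2\sum_{x,y}u(x)u(y)(\n_{xy}\phi)^2 w_{xy}$, and $2|u(x)u(y)|\le u(x)^2+u(y)^2$ with symmetry in $(x,y)$ bounds the last sum by $\sum_{x,y}u(x)^2(\n_{xy}\phi)^2 w_{xy}$. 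Finally, $c\le\varepsilon$ gives $\sum_x cu^2\phi^2\psi\,\mu\le\varepsilon\psi\sum_x u^2\phi^2\mu$ pointwise in $t$. Assembling these and discarding the two nonnegative terms on the favourable side yields
\begin{align*}
\int\psi\sum_{x,y}\bigl(\n_{xy}(u\phi)\bigr)^2 w_{xy}\,dt\le\int\Bigl(\sum_x u^2\phi^2\mu\Bigr)\psi'\,dt+\int\psi\sum_{x,y}u(x)^2(\n_{xy}\phi)^2 w_{xy}\,dt+2\varepsilon\int\psi\sum_x u^2\phi^2\mu\,dt.
\end{align*}

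To close this I need Poincar\'e with a constant independent of $R$, and this is precisely where the strip structure is used: for any $f$ on $V=V_0\times\bb Z$ supported on $W_0\times\bb Z$, writing $f_i:=f(\cdot,i)$, keeping only the ``horizontal'' edges in the Dirichlet energy on $G$ and applying Lemma~\ref{lem:Poincare} on $G_0$ to each $f_i$ (supported on the finite proper set $W_0$) gives $\sum_x f^2\mu\le C_0\sum_{x,y}(\n_{xy}f)^2 w_{xy}$ with $C_0=C_0(G_0)$ independent of the spread of $f$ in the $\bb Z$-direction. Applying this to $u(\cdot,t)\phi$, integrating against $\psi$, and inserting it into the displayed inequality bounds $(1-2C_0\varepsilon)\int\psi\sum_x u^2\phi^2\mu\,dt$ by $C_0$ times its first two right-hand terms. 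Choosing $\varepsilon=\varepsilon(G):=1/(4C_0)$ keeps the left coefficient $\ge\tfrac12$; since $\psi\equiv1$ on $[-r^2,0]$ and $\phi\equiv1$ on $D_r$, the left side dominates $\int_{Q_r}u^2$. On the right, $\psi'$ lives in $[-R^2,-r^2]$ with $\psi'\le(R-r)^{-2}$ and $0\le\phi\le1$ supported in $D_R$, so the first term is $\le(R-r)^{-2}\int_{-R^2}^{-r^2}\sum_{D_R}u^2\mu\,dt\le(R-r)^{-2}\bigl(\int_{Q_R}u^2-\int_{Q_r}u^2\bigr)$; and $(\n_{xy}\phi)^2\le(R-r)^{-2}$ vanishes unless $x\in D_R\setminus D_{r-1}$, with $\sum_y w_{xy}\le\mu(x)$, so the second term is $\le(R-r)^{-2}\int_{-R^2}^0\sum_{D_R\setminus D_{r-1}}u^2\mu\,dt\le(R-r)^{-2}\bigl(\int_{Q_R}u^2-\int_{Q_{r-1}}u^2\bigr)$. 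Summing gives $\int_{Q_r}u^2\le C(R-r)^{-2}\bigl(\int_{Q_R}u^2-\int_{Q_{r-1}}u^2\bigr)$, and absorbing $\int_{Q_r}u^2-\int_{Q_{r-1}}u^2\le\int_{Q_r}u^2$ into the left side yields the stated inequality once $R-r$ exceeds a fixed multiple of $\sqrt C$ (alternatively one may take $\phi\equiv1$ on $D_{r+1}$, which removes the shell $D_r\setminus D_{r-1}$ outright when $R\ge r+2$); the remaining bounded range of $R-r$ is absorbed by enlarging $C$.

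I expect the main obstacle to be exactly the step invoking Lemma~\ref{lem:Poincare} with an $R$-independent constant: on a general graph, or when $W_0=V_0$ so that $u$ need not vanish on any full column $\{x_1\}\times\bb Z$, there is no such uniform Poincar\'e inequality, and correspondingly the conclusion of Theorem~\ref{thm:main-strip} fails (cf. the Example in the introduction). The only other fussy point is the discrete boundary layer: on a graph the difference $\n_{xy}\phi$ of a cutoff that is $\equiv1$ on $D_r$ inevitably ``sees'' the shell just inside $D_r$, which is why the direct estimate produces $\int_{Q_{r-1}}u^2$ where one wants $\int_{Q_r}u^2$; this is handled by the absorption described above.
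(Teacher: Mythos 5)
Your proposal follows essentially the same route as the paper's proof: a parabolic Caccioppoli estimate with a cutoff depending only on the $\mathbb{Z}$-coordinate and on time, Green's formula on the graph, and the slice-wise Poincar\'e inequality of Lemma~\ref{lem:Poincare} applied on each copy of $G_0$ (so that the constant depends only on $W_0$ and not on $R$), with $\varepsilon$ chosen comparable to the reciprocal of that constant to absorb the zeroth-order term. The differences are cosmetic: you apply Proposition~\ref{prop:Green-infinite} with $\Omega=V$ (legitimate, since $u\phi$ is finitely supported at each time) where the paper takes $\Omega=W$ and checks the sign of the boundary term; you use the identity $(\nabla_{xy}u)\,\nabla_{xy}(u\phi^2)=(\nabla_{xy}(u\phi))^2-u(x)u(y)(\nabla_{xy}\phi)^2$ instead of the paper's expansion plus AM--GM; and you invoke the Poincar\'e inequality once (on $u\phi$) rather than twice. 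The one clause that does not hold up is your last one: the remaining bounded range of $R-r$ is not ``absorbed by enlarging $C$.'' Your argument (via absorption, or via the cutoff equal to $1$ on $D_{r+1}$) gives the inequality only for $R-r\ge 2$ (respectively $R-r\gtrsim\sqrt{C}$), and the case $R=r+1$ cannot be deduced from the large-gap cases by adjusting constants: from $\int_{Q_r}u^2\le C(R-r)^{-2}\bigl(\int_{Q_R}u^2-\int_{Q_{r-1}}u^2\bigr)$ no enlargement of $C$ rules out, say, $\int_{Q_{r+1}}u^2=\int_{Q_r}u^2>0$, which the stated inequality with $R=r+1$ would forbid. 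In fairness, the paper's own proof passes over exactly the same discrete boundary layer (the gradient of its cutoff is nonzero on edges touching the level $|i|=r$, yet its final display localizes the right-hand side to $Q_R\setminus Q_r$), and only a large fixed gap $R-r=r_0$ is used in the proof of Theorem~\ref{thm:main-strip}, so the issue is harmless for the application; but as a proof of the proposition for all integers $R>r>0$, your final sentence overstates what has been shown.
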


\begin{proof}
    Let $\varphi$ be a compactly supported cutoff function on $V\times \bb R_{\le 0}$ 
    such that $\varphi(x,\cdot)$ is smooth for any $x\in V.$
    We will specify this cutoff function later.
    Taking the partial derivative with respect to time, we have
    \begin{align}\label{dtu}
        &\quad\frac{1}{2} \partial_t \sum_{x \in V} u^2(x) \varphi^2 (x) \mu(x)\\
        & = \frac{1}{2} \sum_{x \in V} (2u(x) \partial_t u(x)  \varphi^2 (x) + 2\varphi (x) \partial_t \varphi (x) u^2(x)) \mu(x) \nonumber \\ 
        & = \sum_{x \in V}  u(x) \Delta u(x) \varphi^2 (x) \mu(x) + \sum_{x \in V}  cu^2(x)\varphi^2 (x) \mu(x) 
        + \sum_{x \in V} u^2(x) \varphi (x) \partial_t \varphi (x) \mu(x). \nonumber
    \end{align}    
    Applying Proposition \ref{prop:Green-infinite} to the first term in \eqref{dtu} with $\Omega = W,$ 
    \begin{align} \label{eq:apply-greens}
        \sum_{x \in V}  u(x) \Delta u(x) \varphi^2(x) \mu(x)
        & = \sum_{x \in  W}  u(x) \Delta u(x) \varphi^2(x) \mu(x)
        \\
        & = -\frac{1}{2} \sum_{x,y \in W}\nabla_{xy}u\nabla_{xy}(u\varphi^2)w_{xy} 
        + \sum_{x \in W}\sum_{y \in V \setminus W}(\nabla_{xy}u)u(x)\varphi^2(x)w_{xy}. \nonumber
    \end{align}
    Since $u(y) = 0$ for $y \in V \setminus W$, the last term in the above equation becomes,
    \begin{align}
        \sum_{x \in W}\sum_{y \in V\setminus W}(\nabla_{xy}u)u(x)\varphi^2(x)w_{xy} \nonumber
        & = \sum_{x \in W}\sum_{y \in V \setminus W}
        (u(y)-u(x))u(x)\varphi^2(x)w_{xy} \nonumber \\
        & = - \sum_{x \in W}\sum_{y \in V \setminus W} u^2(x)\varphi^2(x)w_{xy} \nonumber \\
        & \leq 0. \nonumber
    \end{align}
    Dropping this term from \eqref{eq:apply-greens}, we obtain,
    \begin{align}\label{before_swap}
        &\quad\sum_{x \in V}  u(x) \Delta u(x) \varphi^2(x) \mu(x) \nonumber\\
        & \leq -\frac{1}{2} \sum_{x,y \in W}\nabla_{xy}u\nabla_{xy}(u\varphi^2)w_{xy} \nonumber\\
        & = -\frac{1}{2} \sum_{x,y\in W} \nabla_{xy}u \pr{u(y)\nabla_{xy}\varphi^2 + \varphi^2(x)\nabla_{xy}u} w_{xy} \nonumber\\  
        & = -\frac{1}{2}\sum_{x,y \in W}u(y)\nabla_{xy}u(\varphi(x)\nabla_{xy}\varphi+\varphi(y)\nabla_{xy}\varphi)w_{xy}
        -\frac{1}{2} \sum_{x,y \in W} |\nabla_{xy}u|^2\varphi^2(x) w_{xy} \nonumber \\
        & = -\frac{1}{2}\sum_{x,y \in W}u(y)\nabla_{xy}u\nabla_{xy}\varphi(2\varphi(x) + \nabla_{xy}\varphi)w_{xy} 
        -\frac{1}{2} \sum_{x,y \in W} |\nabla_{xy}u|^2\varphi^2(x) w_{xy} \nonumber \\
        & = -\sum_{x,y \in W}\varphi(x)u(y)\nabla_{xy}u \nabla_{xy}\varphi ~w_{xy} - \frac{1}{2}\sum_{x,y \in W}u(y)\nabla_{xy}u|\nabla_{xy}\varphi|^2 w_{xy}
        -\frac{1}{2} \sum_{x,y \in W} |\nabla_{xy}u|^2\varphi^2(x) w_{xy}.\nonumber 
    \end{align}
    For the second term, by swapping $x$ and $y$, the symmetry yields,
    \begin{align*}
        - \frac{1}{2}\sum_{x,y \in W}u(y)\nabla_{xy}u|\nabla_{xy}\varphi|^2 w_{xy}
        & = -\frac{1}{4}\sum_{x,y \in W}(u(y)-u(x))\nabla_{xy}u|\nabla_{xy}\varphi|^2 w_{xy}   \\
        & = -\frac{1}{4}\sum_{x,y \in W}|\nabla_{xy}u|^2|\nabla_{xy}\varphi|^2 w_{xy} 
        \leq 0 .
    \end{align*}
    Dropping this non-positive term and using $\varepsilon \geq c$, from \eqref{dtu}, we have,
    \begin{align}
        \frac{1}{2} \partial_t \sum_{x \in V} u^2(x) \varphi^2 (x) \mu(x) 
        & \leq -\sum_{x,y \in W} \varphi(x) u(y) \nabla_{xy}u \nabla_{xy}\varphi w_{xy}
        - \frac{1}{2} \sum_{x,y \in W} |\nabla_{xy}u|^2\varphi^2(x) w_{xy} \\
        & + \varepsilon \sum_{x \in V}  u^2(x)\varphi^2(x) \mu(x) 
        + \sum_{x \in V} u^2(x) \varphi (x) \partial_t \varphi (x) \mu(x). \nonumber
    \end{align}
    Using the AM-GM inequality for the first term on the right side of the inequality, we obtain
    \begin{align} \label{2ab_ineq}
        \frac{1}{2} \partial_t \sum_{x \in V} u^2(x) \varphi^2 (x) \mu(x) 
        & \leq \sum_{x,y \in W}\frac{1}{4} \varphi^2(x)|\nabla_{xy}u|^2 w_{xy} 
        + \sum_{x,y\in W} u^2(y) |\nabla_{xy}\varphi|^2 w_{xy} \\
        &\quad -\frac{1}{2} \sum_{x,y\in W} |\nabla_{xy}u|^2 \varphi^2(x) w_{xy} 
        + \varepsilon \sum_{x \in V}  u^2(x) \varphi^2 (x) \mu(x)   \nonumber\\
        &\quad + \sum_{x\in V} u^2(x) \varphi (x) \partial_t \varphi (x) \mu(x). \nonumber\\
        & = -\frac{1}{4} \sum_{x,y \in W} \varphi^2(x) |\nabla_{xy}u|^2 w_{xy} 
        + \sum_{x,y \in W} u^2(y) |\nabla_{xy}\varphi|^2 w_{xy} \nonumber\\
        &\quad + \varepsilon \sum_{x\in V}  u^2(x) \varphi^2(x) \mu(x) 
        + \sum_{x \in V} u^2(x) \varphi(x) \partial_t \varphi (x) \mu(x).\nonumber
    \end{align}
    Rewriting the weight function $\mu_G(x_0, z)$ on the product graph (with the vertex set $V_0\times\bb Z$) in terms of the weight function $\mu^0(x_0)$ on $G_0$ (with the vertex set $V_0$) we have,
    \begin{align}\label{mu0}
        \mu(x_0, z)
        & = \sum_{(x_0', z')\sim (x_0, z)} w_{(x_0', z)(x_0, z')}\\
        & =\sum_{x_0'\sim x_0} w_{(x_0', z)(x_0, z)}
        +  w_{(x_0, z)(x_0, z+1)}
        + w_{(x_0, z)(x_0, z-1)} \nonumber\\
        & = \sum_{x_0'\sim x_0} 2 w^0_{x_0'x_0}
        + \mu^0(x_0)
        + \mu^0(x_0) \nonumber \\
        & = 4\mu^0(x_0).\nonumber
    \end{align}
    We will take $\varphi$ independent of $x_0\in V_0$ later (see \eqref{construction-varphi}).
    Applying Lemma \ref{lem:Poincare} to the third term on the right side of \eqref{2ab_ineq}, and substituting $\mu(x)$ with $4\mu^0(x_0)$ we obtain 
    \begin{align} 
        \label{eq:poincare}
        \varepsilon \sum_{x \in V}  u^2(x)\varphi^2 (x) \mu(x)
        & = 4\varepsilon \sum_{z \in \mathbb{Z}} \varphi^2 (z) \sum_{x_0 \in V_0} u^2(x_0,z)\mu^0(x_0) \nonumber\\ 
        & \leq 4C \varepsilon \sum_{z \in \mathbb{Z}} \varphi^2 (z) \sum_{x_0, y_0 \in V_0} |\nabla_{x_0y_0}u|^2 w^0_{x_0y_0} 
        \nonumber \\
        & \leq 4C \varepsilon \sum_{x,y \in V} \varphi^2 (x) |\nabla_{xy}u|^2w_{xy}
    \end{align}
    for a constant $C=C(W_0)>0.$
    Substituting \eqref{eq:poincare} into \eqref{2ab_ineq}, we have
    \begin{align}
        \frac{1}{2} \partial_t \sum_{x \in V} u^2(x) \varphi^2 (x) \mu(x) 
        & \leq -\frac{1}{4}\sum_{x,y \in W}\varphi^2(x)|\nabla_{xy}u|^2 w_{xy} + \sum_{x,y \in W}u^2(x)|\nabla_{xy}\varphi|^2 w_{xy} \nonumber \\
        & + 4C \varepsilon \sum_{x,y \in V} \varphi^2 (x) |\nabla_{xy}u|^2w_{xy} 
        + \sum_{x \in V} u^2(x) \varphi (x) \partial_t \varphi (x) \mu(x). \nonumber \\
        & = \pr{4C\varepsilon-\frac{1}{4}}
        \sum_{x,y \in W}\varphi^2(x)|\nabla_{xy}u|^2 w_{xy} \nonumber \\
        & + \sum_{x,y \in V}u^2(x)|\nabla_{xy}\varphi|^2 w_{xy} + \sum_{x \in V} u^2(x) \varphi (x) \partial_t \varphi (x) \mu(x). \nonumber
    \end{align}
    Rearranging terms in the above inequality,
    \begin{align}
        \partial_t \sum_{x \in V} u^2(x) \varphi^2 (x) \mu(x) + \left(\frac{1-16C\varepsilon}{2}\right)\sum_{x,y \in W}\varphi^2(x)|\nabla_{xy}u|^2 w_{xy} \nonumber \\
        \leq 2\sum_{x,y \in V}u^2(x)|\nabla_{xy}\varphi|^2 w_{xy} + 2\sum_{x \in V} u^2(x) \varphi (x) \partial_t \varphi (x) \mu(x). \nonumber
    \end{align}
    We choose $\varepsilon = \frac{1}{32C}$ so that $\frac{(1-16C\varepsilon)}{2} = \frac{1}{4} > 0$.
    Integrating over $t \in [-R^2, 0]$, we have
    \begin{align} 
        \label{before_poincare}
        &\sum_{x \in V} u^2(x) \varphi^2 (x) \mu(x) |_{t=0}
        - \sum_{x \in V} u^2(x) \varphi^2 (x) \mu(x) |_{t=-R^2}
        + \frac 14\int_{-R^2}^0  \sum_{x,y \in W}\varphi^2(x)|\nabla_{xy}u|^2 w_{xy} dt \\ 
        &\leq 2 \int_{-R^2}^0 \sum_{x,y \in V} u^2(x) |\nabla_{xy}\varphi|^2 w_{xy} dt 
        + 2 \int_{-R^2}^0 \sum_{x \in V} u^2(x) \varphi (x) \partial_t \varphi (x) \mu(x) dt. \nonumber
    \end{align}
    \noindent Applying Lemma \ref{lem:Poincare} to the third term on the left side of \eqref{before_poincare}, we obtain, 
    \begin{align} \label{after_poincare}
        & \frac{1}{4}\int_{-R^2}^0  \sum_{x,y \in W}\varphi^2(x)|\nabla_{xy}u|^2 w_{xy} dt \nonumber \\
        & \geq \frac{1}{4} \int_{-R^2}^0  \sum_{z \in \mathbb{Z}}\varphi^2(z) \sum_{x_0, y_0 \in W_0}|\nabla_{xy}u|^2 w_{x_0y_0} \nonumber dt \\
        & \geq \frac{1}{4C} \int_{-R^2}^0  \sum_{z \in \mathbb{Z}}\varphi^2(z) \sum_{x_0 \in W_0} u^2(x_0,z) \mu^0(x_0) dt.
    \end{align}
    Substituting $\frac{1}{4}\mu(x)$ for $\mu^0(x_0)$ in \eqref{after_poincare}, we have
    \begin{align}
        \label{poincare_final}
        \frac{1}{4}\int_{-R^2}^0  \sum_{x,y \in W}\varphi^2(x)|\nabla_{xy}u|^2 w_{xy} dt
        &\geq \frac{1}{16C} \int_{-R^2}^0  \sum_{x \in W} u^2(x) \varphi^2(x) \mu(x) dt.
    \end{align}
    Thus, we get
    \begin{align}\label{u-bound}
        &\int_{t=0} \sum_{x \in V} u^2(x) \varphi^2 (x) \mu(x) dt + \frac 1{16C}\int_{-R^2}^0  \sum_{x \in W} u^2(x) \varphi^2(x) \mu(x) dt \\ 
        &\leq 2 \int_{-R^2}^0 \sum_{x,y \in V} u^2(x) |\nabla_{xy}\varphi|^2 w_{xy} dt 
        + 2 \int_{-R^2}^0 \sum_{x \in V} u^2(x) \varphi (x) \partial_t \varphi (x) \mu(x) dt.\nonumber
    \end{align}

    Now, we specify the cutoff function $\varphi$. 
    We define it by 
    \begin{align}\label{construction-varphi}
        \varphi((i,x_0),t) = 
        \begin{cases}
            1 & |i| \leq r  \quad \text{and} \quad t \ge -r^2\\
            0 & |i| \geq R \quad \text{or} \quad t \le -R^2\\
            \frac{R-i}{R-r}\phi(t) &  \quad \text{otherwise}            
        \end{cases} 
    \end{align}
    where $x_0$ is a fixed point in $W_0$ and $\phi(t) \in C_c^\infty$ is a compactly supported smooth function in the time direction such that
    \begin{align*}
        \phi(t) =\begin{cases}
            1 &\text{if }|t|\le r^2\\
            0 &\text{if }|t|\ge R^2\\
        \end{cases}
    \end{align*}
    and $|\bd_t\phi|\le \frac 2{R^2-r^2}.$
    Then, $\varphi$ satisfies $\varphi = 1$ on $Q_r$ and $\varphi = 0$ outside $Q_R$ and $\varphi \in [0,1]$. 
    We can also let
    \begin{align} \label{cutoff_bounds}
        |\nabla_{xy}\varphi| \leq \frac{2}{R-r} \text{  and  } |\partial_t\varphi| \leq \frac{2}{R^2-r^2} \leq \frac{2}{(R-r)^2}. 
    \end{align}
    Combining \eqref{u-bound}, and \eqref{cutoff_bounds}, we obtain
    \begin{align*}
        \frac{1}{16C} \int_{Q_r} u^2(x) 
        \leq \frac{8}{(R-r)^2} \int_{Q_R\setminus Q_r} u^2(x).
    \end{align*}
    This completes the proof.
\end{proof}

\subsection{Proof of Theorem~\ref{thm:main-strip}}

We will now prove the first main theorem.
Recall the integral notation defined in equation \eqref{integral_notation} in Section~\ref{sec:3}. 
\begin{proof}[Proof of Theorem~\ref{thm:main-strip}]
    Fix $r > 1$. For any $R > r$, we have
    \begin{align}
        \int_{Q_r} u^2(x) \leq \frac{C}{(R-r)^2} \left( \int_{Q_R} u^2(x) - \int_{Q_r} u^2(x)\right). \nonumber
    \end{align}
    Rearranging terms, we obtain
    \begin{align}
        \int_{Q_R} u^2(x)
        \geq  \frac{\left( 1+ \frac{C}{(R-r)^2}\right)}{\frac{C}{(R-r)^2}}
        \int_{Q_r} u^2(x). \nonumber
    \end{align}
    Choose $r_0 = \sqrt{C(e-1)} > 0$. 
    Then, letting $R = r+r_0$, we have
    \begin{align}\label{inequality_iter}
        \int_{Q_{r+r_0}}u^2(x) 
        \geq e\int_{Q_r}u^2(x).
    \end{align}
    Iterations of inequality \eqref{inequality_iter} imply that for any integer $k>0$,
    \begin{align}
        \int_{Q_{r+kr_0}}u^2(x) \geq e\int_{Q_{r+(k-1)r_0}}u^2(x) \geq ... \geq e^k\int_{Q_r}u^2(x). \nonumber
    \end{align}
    If $u \in P_d$ is an ancient solution with polynomial growth, then for any $R>1,$ 
    \begin{align*}
        \int_{Q_R} u^2(x)
        & \le \int_{-R^2}^0 \sum_{x\in D_R\cap W} \pr{C\pr{1 + |d(x_0, x)| + \sqrt{-t}}^d}^2 \mu(x) dt
        &\le CR^{2d} \int_{-R^2}^0 \sum_{x\in D_R\cap W} \mu(x) dt
    \end{align*}
    where $d(x_0,x)\le d(x_0,W_0\times\{0\}) + {\rm diam }W_0\le R+C(W_0)$
    if we assume $x_0\in W_0\times\{0\}.$ 
    Using \eqref{mu0}, we get
    \begin{align*}
        \int_{Q_R} u^2(x)
        & \leq CR^{2d+2} \sum_{z=-R}^R \sum_{x_0\in W_0} 4\mu^0(x_0)\\
        & = CR^{2d+2} \sum_{z=-R}^R C(W_0)\\
        & = 2C R^{2d + 3}.
    \end{align*}
    Combining all of these implies
    \begin{align}
        \int_{Q_r}u^2(x) \leq \frac{1}{e^k} \int_{Q_{r+kr_0}} u^2(x) \leq \frac{2C(r+kr_0)^{2d+3}}{e^k}, \nonumber
    \end{align}
    which leads to a contradiction for large $k$ unless $u(x) \equiv 0$.
\end{proof}

\section{\bf Ancient caloric functions on finite graphs}

For a finite graph, we will prove a stronger result that implies Theorem~\ref{thm:main-finite} directly.
The main technique is linear algebra theory.

\begin{thm}\label{thm:main-freq-refined}
     Let $G = (V, E, w)$ be a finite connected weighted graph.
    If $u\colon V\times \bb R_{\le 0}\to \bb R$ satisfies $\bd_t u= \D u,$ then $u$ has exponential growth in time.
    In particular, it cannot be a non-trivial function with polynomical growth (in time).
\end{thm}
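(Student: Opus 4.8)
The plan is to diagonalize the Laplacian on the finite graph and expand $u$ in the corresponding eigenbasis. Since $G$ is finite and connected, the space of functions on $V$ is finite-dimensional, and $\Delta$ is self-adjoint with respect to the inner product $\langle f, g\rangle = \sum_{x\in V} f(x)g(x)\mu(x)$ — this follows from Green's formula (Proposition~\ref{Green's theorem}) with $\Omega = V$, which gives $\langle \Delta f, g\rangle = -\tfrac12\sum_{x,y\in V}(\nabla_{xy}f)(\nabla_{xy}g)\mu_{xy} = \langle f, \Delta g\rangle$. Hence there is an orthonormal basis $\phi_1,\dots,\phi_N$ of eigenfunctions with real eigenvalues $-\lambda_1 \le \cdots \le -\lambda_N$; moreover the same quadratic-form computation shows $\lambda_j \ge 0$, with $\lambda_1 = 0$ corresponding to the constant function.

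First I would write, for each fixed $t \le 0$, the expansion $u(\cdot, t) = \sum_{j=1}^N a_j(t)\,\phi_j$, where $a_j(t) = \langle u(\cdot,t), \phi_j\rangle$. Plugging into $\partial_t u = \Delta u$ and using that the $\phi_j$ are time-independent eigenfunctions yields the decoupled ODEs $a_j'(t) = -\lambda_j a_j(t)$, so $a_j(t) = a_j(0)\,e^{-\lambda_j t}$. Therefore
\begin{align*}
    u(\cdot, t) = \sum_{j=1}^N a_j(0)\, e^{-\lambda_j t}\, \phi_j.
\end{align*}
Next I would analyze growth as $t \to -\infty$. Let $j^*$ be the largest index with $a_{j^*}(0) \ne 0$ (if all $a_j(0) = 0$ then $u \equiv 0$ and there is nothing to prove). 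If $\lambda_{j^*} > 0$, then since $e^{-\lambda_{j^*}t} \to \infty$ as $t\to-\infty$ and this term dominates all others with nonzero coefficient (orthonormality gives $\|u(\cdot,t)\|^2 = \sum_j a_j(0)^2 e^{-2\lambda_j t} \ge a_{j^*}(0)^2 e^{-2\lambda_{j^*}t}$), the $L^2$-norm of $u(\cdot,t)$ grows like $e^{\lambda_{j^*}|t|}$, i.e. exponentially in $|t|$; since $V$ is finite this forces pointwise exponential growth as well, contradicting any polynomial bound $|u(x,t)| \le C(1+|d(x_0,x)|+\sqrt{-t})^d \le C'(1+\sqrt{-t})^d$. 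Hence if $u$ has polynomial growth in time we must have $\lambda_{j^*} = 0$, meaning $u(\cdot,t)$ lies entirely in the kernel of $\Delta$, is independent of $t$, and is harmonic — which is exactly Theorem~\ref{thm:main-finite}; and in general the $L^2$-norm is $\ge a_{j^*}(0)e^{\lambda_{j^*}|t|}$, establishing the exponential-growth dichotomy of Theorem~\ref{thm:main-freq-refined}.

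I do not anticipate a serious obstacle here; the argument is essentially the spectral decomposition of a finite-dimensional linear evolution. The one point requiring a little care is making precise the passage between the $L^2$ (weighted sum) norm and the pointwise sup: on a finite graph all norms on the finite-dimensional function space are equivalent, with constants depending only on $\min_x \mu(x)$ and $|V|$, so an exponential lower bound on $\|u(\cdot,t)\|^2$ gives an exponential lower bound on $\max_{x}|u(x,t)|$. A second minor point is verifying the eigenvalues are nonnegative and that $\lambda = 0$ corresponds precisely to harmonic (here, constant, by connectedness) functions; both come directly from the Dirichlet-form expression in Green's formula. Everything else is routine.
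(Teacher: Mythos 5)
Your argument is correct, but it follows a genuinely different route from the paper. You exploit the fact that $\Delta$ is self-adjoint with respect to the $\mu$-weighted inner product (via Green's formula with $\Omega=V$), diagonalize it in an orthonormal eigenbasis, and reduce the ancient solution to decoupled scalar ODEs $a_j'=-\lambda_j a_j$, whose explicit solutions give the dichotomy ``exponential growth in $|t|$ or supported in $\ker\Delta$, hence constant in time and harmonic.'' The paper instead does not use symmetry at all: it writes the evolution as $u=e^{At}u_0$, justified by invoking the backward uniqueness result (Theorem~\ref{thm:backwards_uniqueness}), and analyzes $e^{At}$ through the Jordan normal form of $A$, with the eigenvalue range $[-2,0]$ quoted from Grigor'yan. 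Your approach buys two simplifications: self-adjointness makes $A$ diagonalizable, so no Jordan blocks or polynomial factors $t^k e^{\lambda t}$ need to be discussed, and the coefficient ODEs are unique in both time directions by elementary ODE theory, so the appeal to the backward uniqueness theorem (or, in finite dimensions, Picard--Lindel\"of) is not needed; the paper's route, on the other hand, works verbatim for non-symmetric perturbations of $\Delta$ and showcases the backward uniqueness result that is one of the paper's themes. Two small points in your write-up: with your ordering $-\lambda_1\le\cdots\le-\lambda_N$ the zero eigenvalue is $\lambda_N$, not $\lambda_1$, and the mode dominating as $t\to-\infty$ is the one with the \emph{largest} $\lambda_j$ among the active modes, not the largest index; neither affects the proof, since your lower bound $\|u(\cdot,t)\|^2\ge a_{j}(0)^2 e^{2\lambda_{j}|t|}$ is valid for any active mode with $\lambda_{j}>0$, which is all the contradiction with polynomial growth requires.
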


\begin{proof}

Let $n=|V|<\infty$ and let $A$ be the matrix corresponding to $\D$ on $G.$
By \cite[Theorem~2.7]{Gr}, for a finite, connected, weighted graph $G$, the eigenvalues of the matrix of the Laplacian are contained in $[-2,0]$. Since $\frac{d}{dt}e^{At} = Ae^{At},$
the backwards uniqueness (Theorem~\ref{thm:backwards_uniqueness}) implies that the solution to $\partial_t u  = Au$ with final condition $u(\cdot, 0) = u_0$ is given by $u = e^{At}u_0$.

Writing the matrix $A$ in Jordan normal form, we obtain
$A = QJQ^{-1}$ where $J$ is the Jordan form of $A$ and $Q$ is an invertible matrix.
Taking the exponential of $At,$ we obtain,
\begin{align} \label{exponential_A}
    e^{At} &= Q e^{Jt} Q^{-1} \nonumber \\ 
    &= Q\begin{pmatrix}
        e^{J_{\lambda_1,n_1}t} & & \\
        & \ddots & \\
        & & e^{J_{\lambda_m,n_m}}t
    \end{pmatrix}Q^{-1}.
\end{align}
where $m$ is the number of the Jordan blocks,
$\lambda_i$'s are the  eigenvalues of $A$ (including repeated eigenvalues),
and $J_{\lambda_i, j}$'s are the corresponding Jordan blocks.
Each Jordan block can be written in the form 
\begin{align} \label{jordan_matrix}
    J_{\lambda_i, n_i} = \lambda_iI_{n_i}+N_{n_i},
\end{align}
where $N_{n_i}$ is an $n_i \times n_i$ nilpotent matrix.
Substituting \eqref{jordan_matrix} into \eqref{exponential_A}, we obtain,
\begin{align} \label{exp_A}
     e^{At} &= 
     Q{\begin{pmatrix}
         e^{\lambda_1t} e^{N_{n_1}t} & & \\
         & \ddots & \\
         & & e^{\lambda_mt} e^{N_{n_m}t}
     \end{pmatrix}}Q^{-1}.\nonumber \\
     &= Q{\begin{pmatrix}
         e^{\lambda_1t}\sum_{k=0}^{n_1}\frac{1}{k!}(N_{n_1}t)^k & & \\
         & \ddots & \\
         & & e^{\lambda_mt}\sum_{k=0}^{n_m}\frac{1}{k!}(N_{n_m}t)^k
     \end{pmatrix}}Q^{-1}.
\end{align}   

For each $i,$ summing over $k$, we have
\begin{align*}
    \sum_{k=0}^{n_i}\frac{1}{k!}(N_{n_i}t)^k 
    & =  \begin{pmatrix}
        1 & t & \frac{1}{2}t^2&\hdots & \frac{1}{(n_{i}-1)!}t^{n_{i}-1} \\
        0 & 1 & t & \ddots & \vdots\\
        \vdots & & \ddots & & \frac{1}{2}t^2\\
        \vdots & & \ddots & & t\\
        0 & \hdots & \hdots &  0 & 1
    \end{pmatrix},
\end{align*} 
showing that each entry of these  submatrices has at most polynomial growth in $t$. 
Then, each entry of the following matrix 
\begin{align*}
    e^{\lambda_it}\sum_{k=0}^{n_i}\frac{1}{k!}(N_{n_i}t)^k  
\end{align*}
is exponential in $t$ if $\lambda_i \neq 0$ and constant in $t$ if $\lambda_i = 0$. 
Hence, the solution $u =  e^{At}u_0$ is a linear combination of exponential and constant functions based on the final condition $u_0$. 
It follows that if $u$ satisfies
\begin{align*}
        |u(x,t)| \le C\pr{1 + \sqrt{-t}}^d
    \end{align*}
for positive constants $C$ and $d,$ then the exponential part of $u$ vanishes and $u$ is constant. 
\end{proof}

\section{\bf Backward uniqueness}

\subsection{Frequency on Weighted Graphs}
Let $G = (V, E, w)$ be a locally finite connected weighted graph.
Given two functions $f,g: V \rightarrow \mathbb{R}$, we consider the inner product of their gradients\footnote{This is related to the difference operator $\nabla_{xy}$, but here we write it in this way to emphasize its connection with the continuous case.}
\begin{align*}
    \langle \nabla f, \nabla g \rangle (x):= \sum_{y \sim x}\frac{w_{xy}}{\mu(x)}(f(y)-f(x))(g(y)-g(x))
\end{align*}
and $|\nabla f|^2(x) := \langle \nabla f, \nabla f \rangle (x)$.
For convenience, we may sometimes denote
\begin{align*}
    \sum_{V} \langle \nabla f, \nabla g \rangle \mu(x) := \sum_{x \in V} \langle \nabla f, \nabla g \rangle (x) \mu(x).
\end{align*}
We also introduce the function space 
\begin{align*}
    W^{1,2}_{\mu}(V) := \left\{ u: V \rightarrow \mathbb{R}: \sum_{x \in V} u^2 \mu(x) < \infty \right\}.
\end{align*}
Let $G = (V, E, w)$ be a weighted graph and $[a,b]\sbst\bb R$ be a time interval. For a nontrivial function $u\colon V\times [a,b]\to\bb R$ such that $u(x,\cdot)$ is $C^1$ given any $x\in V$ and that $u,\partial_t u\in W^{1,2}_\mu(V),$ we define the following functions
\begin{equation*}
I(t) := \sum_{x\in V} u^2\mu(x),
\end{equation*}
\begin{equation*}
D(t) := -\sum_{x\in V} |\n u|^2\mu(x),
\end{equation*}
and
\begin{equation*}
U(t) := \frac{D(t)}{I(t)} = - \frac{\sum_{x\in V} |\n u|^2\mu(x)}{ \sum_{x \in V} u^2\mu(x)}.
\end{equation*}
We call $U$ the frequency function for $u.$ This is a discrete version of the one defined in \cite{CM22}
\footnote{Note that we follow the sign convention in \cite{CM22}. That is, the frequency function is always non-positive, unlike that in \cite{A} and \cite{Poon}.} 
and the methods employed there enable us to get the corresponding results on weighted graphs.

\subsection{\bf Frequency Monotonicity and Backward Uniqueness}
In this section, we will prove the main estimate and its corollaries.
\begin{thm}
\label{U'}
	Let $(V,E,w)$ be a weighted graph and $u\colon V\times[a,b]\to\bb R$ with $u,\partial_t u\in W^{1,2}_\mu(V).$ If there exists $C=C(t)>0$ such that
	\begin{equation}\label{gass}
	|(\bd_t-\D)u|\le C(t)(|u|+|\n u|),
	\end{equation}
	then we have
	\begin{equation*}
	U'\ge 2C^2(U-1).
	\end{equation*}
\end{thm}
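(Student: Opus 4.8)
The plan is to imitate Poon's computation of the derivative of the frequency, adapted to the graph setting. First I would compute $I'(t)$. Differentiating $I(t) = \sum_x u^2\mu(x)$ gives $I' = 2\sum_x u\,\partial_t u\,\mu(x)$. Writing $\partial_t u = \Delta u + (\partial_t - \Delta)u$ and applying Green's formula (Proposition~\ref{Green's theorem}, in its boundary-free $L^2$ form, valid because $u, \partial_t u \in W^{1,2}_\mu$) to the $\sum_x u\,\Delta u\,\mu(x)$ term, I get $\sum_x u\,\Delta u\,\mu(x) = -\sum_x |\nabla u|^2\mu(x) = D(t)$; here one must be slightly careful about whether the infinite sum version of Green's formula applies, but the $W^{1,2}_\mu$ hypothesis should make the boundary term vanish in the limit. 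Hence $I' = 2D + 2\sum_x u(\partial_t-\Delta)u\,\mu(x)$. Using the Cauchy--Schwarz inequality and the structural hypothesis~\eqref{gass}, the error term is controlled: $\left|\sum_x u(\partial_t-\Delta)u\,\mu(x)\right| \le C\sum_x |u|(|u|+|\nabla u|)\mu(x) \le C(I + I^{1/2}(-D)^{1/2}) \le C(I - D) + (\text{lower order})$, using Young's inequality. The upshot is a differential inequality $I' \ge 2D - 2C(I-D) = 2(1+C)D - 2CI$ (after absorbing constants appropriately), or more cleanly $I' - 2D \ge -2C(I-D)$.

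Next I would compute $D'(t)$, which is the heart of the matter. Differentiating $D(t) = -\sum_x |\nabla u|^2\mu(x) = \sum_x u\,\Delta u\,\mu(x)$ (using Green's formula), I get $D' = \sum_x (\partial_t u)\,\Delta u\,\mu(x) + \sum_x u\,\Delta(\partial_t u)\,\mu(x)$. Since $\Delta$ is self-adjoint with respect to $\mu$ (Green's formula with $f = u$, $g = \partial_t u$, or the symmetry of the expression $\sum_{x,y}(\nabla_{xy}f)(\nabla_{xy}g)w_{xy}$), the two terms are equal, so $D' = 2\sum_x (\Delta u)(\partial_t u)\,\mu(x)$. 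Now write $\partial_t u = \Delta u + (\partial_t-\Delta)u$ to get $D' = 2\sum_x (\Delta u)^2\mu(x) + 2\sum_x (\Delta u)(\partial_t-\Delta)u\,\mu(x)$. The key point is the Cauchy--Schwarz inequality $D^2 = \left(\sum_x u\,\Delta u\,\mu(x)\right)^2 \le \left(\sum_x u^2\mu(x)\right)\left(\sum_x (\Delta u)^2\mu(x)\right) = I\cdot\left(\sum_x (\Delta u)^2\mu(x)\right)$, i.e.\ $\sum_x (\Delta u)^2\mu(x) \ge D^2/I$. The error term is again controlled by~\eqref{gass}: $\left|\sum_x (\Delta u)(\partial_t-\Delta)u\,\mu(x)\right| \le C\left(\sum_x(\Delta u)^2\mu(x)\right)^{1/2}(I + (-D))^{1/2}$, and after Young's inequality this can be absorbed into a fraction of $\sum_x (\Delta u)^2\mu(x)$ plus a multiple of $C^2(I-D)$. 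So $D' \ge 2D^2/I - (\text{absorbable error})$, giving something like $D' \ge (2-\epsilon)D^2/I - C_\epsilon C^2(I-D)$.

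Finally, assembling $U' = (D'I - DI')/I^2 = D'/I - (D/I)(I'/I)$, I substitute the two inequalities. The leading terms produce $2D^2/I^2 - 2D^2/I^2 = 0$ after the Cauchy--Schwarz bounds (this is the miracle that makes frequency monotonicity work — the principal part vanishes exactly when $u$ is caloric), and the $C$-error terms combine to give a bound of the form $U' \ge -(\text{const})\,C^2\,\frac{I-D}{I}$. Since $\frac{I-D}{I} = 1 - \frac{D}{I} = 1 - U$, and recalling $U \le 0$ in this sign convention so $1 - U \ge 1 > 0$, this reads $U' \ge -(\text{const})\,C^2(1-U) = (\text{const})\,C^2(U-1)$. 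The sharp tracking of constants to land exactly on $2C^2(U-1)$ requires being careful with the Young's-inequality splittings — choosing the right $\epsilon$ at each step so that the absorbable pieces of $\sum_x(\Delta u)^2\mu(x)$ net out correctly against the $D^2/I$ term. I expect this bookkeeping — making all the error terms combine into precisely the coefficient $2C^2$ rather than some larger constant — to be the main obstacle; the structural computations ($I'$, $D'$, self-adjointness of $\Delta$, the two Cauchy--Schwarz inequalities) are routine given Green's formula.
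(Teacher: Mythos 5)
Your overall strategy (differentiate $I$ and $D$, use self-adjointness of $\Delta$ via Green's formula, one Cauchy--Schwarz for the leading term, control the error by \eqref{gass}) is the same frequency-monotonicity computation the paper runs, but as written the proposal has two problems, one minor and one substantive. The minor one is a normalization slip: with the paper's definition $|\nabla u|^2(x)=\sum_{y\sim x}\frac{w_{xy}}{\mu(x)}(u(y)-u(x))^2$, the sum $\sum_x|\nabla u|^2\mu(x)$ counts each edge twice relative to the $\tfrac12\sum_{x,y}$ in Green's formula, so $\sum_x u\,\Delta u\,\mu(x)=\tfrac12 D$, not $D$. Consequently $I'=D+2\sum_x u(\partial_t-\Delta)u\,\mu$, $D'=4\sum_x(\Delta u)(\partial_t u)\,\mu$, and the relevant Cauchy--Schwarz is $D^2/4\le I\sum_x(\Delta u)^2\mu$; your versions are off by factors of $2$ throughout.

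The substantive gap is in the assembly. You propose to prove two separate inequalities, $I'\ge 2D-2C(I-D)$ and $D'\ge(2-\epsilon)D^2/I-C_\epsilon C^2(I-D)$, and then substitute them into $U'=D'/I-(D/I)(I'/I)$. This cannot yield $U'\ge 2C^2(U-1)$: the error from the $I'$ bound enters multiplied by $-D/I^2$ and produces a term of the form $2C\,U(1-U)$, which is first order in $C$ and quadratic in $U$, and the $\epsilon$-absorption in the $D'$ bound leaves a deficit $-\epsilon U^2$; neither is dominated by $2C^2(U-1)$ when $|U|$ is large, and choosing $\epsilon$ to depend on $C$ only trades one bad term for another. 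The missing idea is that the two first-order error terms must be kept together: writing $E=(\partial_t-\Delta)u$, the cross terms in $U'$ combine into $\frac{4}{I}\sum_x\bigl(\Delta u-\tfrac{D}{2I}u\bigr)E\,\mu$, and Cauchy--Schwarz bounds this by $Q^{1/2}\bigl(\sum E^2\mu\bigr)^{1/2}$ with $Q=\sum_x\bigl(\Delta u-\tfrac{D}{2I}u\bigr)^2\mu$ \emph{exactly} the nonnegative surplus that Cauchy--Schwarz leaves in the leading term; Young with parameter $1$ then gives $U'\ge-\frac{1}{I}\sum E^2\mu\ge 2C^2(U-1)$ using $(|u|+|\nabla u|)^2\le2(u^2+|\nabla u|^2)$. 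So your route is salvageable, but only with this grouping, which your sketch does not contain. The paper sidesteps the issue by symmetrizing from the start: it expresses $D$, $I'$ and $D'$ through $\partial_t u-\tfrac12(\partial_t u-\Delta u)$ and $\partial_t u-\Delta u$, so that in $D'I-I'D$ the dangerous cross terms cancel identically, a single Cauchy--Schwarz between $u$ and $\partial_t u-\tfrac12(\partial_t u-\Delta u)$ handles the principal part, and the constant $2C^2$ drops out with no Young splitting at all.
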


\begin{proof}
     By definition, we have
\begin{equation*}
D
= 2\sum_{x \in V} u\D u~\mu(x)
= 2\sum_{x \in V} u\left(\partial_t u - \frac 12(\partial_t u-\D u) \right)\mu(x) 
- \sum_{x \in V} u(\partial_t u-\D u)\mu(x)
\end{equation*}
and
\begin{equation*}
I'
= 2\sum_{x \in V} u\partial_t u~\mu(x)
= 2\sum_{x \in V} u\left(\partial_t u - \frac 12(\partial_t u-\D u) \right)\mu(x) 
+ \sum_{x \in V} u(\partial_t u-\D u)\mu(x),
\end{equation*}
so
\begin{equation}\label{I'D}
I' D = 4\left(\sum_{x \in V} u\left(\partial_t u - \frac 12(\partial_t u-\D u) \right)\mu(x) \right)^2
- \left(\sum_{x \in V} u(\partial_t u-\D u)\mu(x)\right)^2.
\end{equation}
On the other hand,
\begin{align*}
D' I
= 4\sum_{x \in V} \partial_t u\D u~\mu(x)\cdot I
& = 4\sum_{x \in V} \partial_t u\left(\partial_t u-(\partial_t u-\D u)\right)\mu(x)\cdot I\\
& = 4\sum_{x \in V} \left(\left(\partial_t u-\frac 12(\partial_t u-\D u)\right)^2 - \frac 14(\partial_t u-\D u)^2 \right)\mu(x)\cdot I.
\end{align*}
Combining this with \eqref{I'D} and using the Cauchy-Schwarz inequality and the assumption \eqref{gass}, we derive
\begin{align*}
&~~~~D'I-I'D\numberthis\label{CS}\\
& = 4\left(\left(\sum_{x \in V}u^2\mu(x)\right)\left(\sum_{x \in V} \left(\partial_t u-\frac 12(\partial_t u-\D u)\right)^2\mu(x)\right)
-  \left(\sum_{x \in V} u\left(\partial_t u - \frac 12(\partial_t u-\D u) \right)\mu(x) \right)^2
\right)\\
& - I \sum_{x \in V} (\partial_t u-\D u)^2\mu(x)
+ \left(\sum_{x \in V} u(\partial_t u-\D u)\mu(x)\right)^2\\
& \ge - C^2 I \sum_{x \in V} (|u|+|\n u|)^2 \mu(x)
\ge - 2 C^2 I \sum_{x \in V} (u^2 + |\n u|^2)\mu(x)
= -2C^2 (I^2-ID).
\end{align*}
As a result, 
\begin{align*}
U'
= \frac{D'I-I'D}{I^2}
\ge -2C^2\left(1-\frac DI\right)=2C^2(U-1),
\end{align*}
and the result follows. 
\end{proof}

As a corollary, we have a Harnack-type inequality for the Dirichlet energy $I.$
\begin{cor}\label{unique}
	Let $(V,E,w)$ be a weighted graph and $u\colon V\times[a,b]\to\bb R$ with $u,\partial_t u\in W^{1,2}_\mu(V).$ If there exists $C(t)>0$ such that
	\begin{equation*}
	|(\bd_t-\D)u|\le C(t)(|u|+|\n u|),
	\end{equation*}
	then we have
	\begin{equation}\label{Iab}
	I(b)\ge I(a)\cdot e^{(b-a)\left((1+K)(U(a)-1)e^{2(b-a)K^2}-3K\right)}
	\end{equation}
	where $K:=\sup_{[a,b]}C.$
\end{cor}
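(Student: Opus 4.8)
The plan is to deduce the corollary by integrating the differential inequality from Theorem~\ref{U'}, treating $U$ as the central quantity and then converting the resulting bound on $U$ into a bound on $I$ via the identity $(\log I)' = I'/I = 2U$. First I would set $K := \sup_{[a,b]} C$ and record the two consequences of \eqref{gass} that we will use: the frequency inequality $U' \ge 2C^2(U-1) \ge -2K^2$ (after possibly splitting into the cases $U\ge 1$ and $U<1$, where in the latter case $2C^2(U-1) \ge 2K^2(U-1)$, a linear lower bound in $U$), and the relation $I'=2UI$, which follows directly from the definitions of $I$, $D$, and $U$ (indeed $I' = 2\sum u\,\partial_t u\,\mu$, and combining with $D = 2\sum u\Delta u\,\mu$ and $(\partial_t-\Delta)u$ small is not even needed here — $I' = 2UI$ holds as soon as we relate $I'$ to $D$; more precisely one uses $\partial_t u = \Delta u + (\partial_t - \Delta)u$ and absorbs the error, or simply notes the exact identity $D = I' - 2\sum u(\partial_t u - \Delta u)\mu$). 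Actually, the cleanest route is: from $I' = 2\sum_x u\,\partial_t u\,\mu$ and $D = 2\sum_x u\,\Delta u\,\mu$ we get $I' - D = 2\sum_x u(\partial_t u - \Delta u)\mu$, and since $D = UI$ this gives $I' = 2UI + 2\sum_x u(\partial_t u - \Delta u)\mu$; so $I'$ is not exactly $2UI$ unless $u$ is caloric. I would therefore keep the error term and bound it using \eqref{gass} and Cauchy--Schwarz.

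The key steps, in order, are as follows. Step~1: From Theorem~\ref{U'}, when $U \le 1$ we have $U' \ge 2C^2(U-1) \ge 2K^2(U-1)$, so $\frac{d}{dt}\bigl((U-1)e^{-2K^2(t-a)}\bigr) \ge 0$ when $U<1$; integrating from $a$ to $t$ yields the lower bound $U(t) - 1 \ge (U(a)-1)e^{2K^2(t-a)}$ in the regime where $U$ stays below $1$, and if $U$ ever reaches $1$ it is even better (we only need a lower bound on $U$, and once $U\ge 1$ we can use $U'\ge 0$ there, or just note $U(t)-1 \ge (U(a)-1)e^{2K^2(b-a)}$ is a valid lower bound throughout since the right side is $\le 0$ when $U(a)<1$ and $\le 0 \le U(t)-1$ otherwise — care is needed with signs of $U(a)-1$). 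Step~2: With the pointwise lower bound $U(t) \ge 1 + (U(a)-1)e^{2K^2(b-a)}$ valid on all of $[a,b]$, write $\frac{d}{dt}\log I = \frac{I'}{I} = 2U + \frac{2}{I}\sum_x u(\partial_t u - \Delta u)\mu$. Step~3: Bound the error term: $\bigl|\frac{2}{I}\sum_x u(\partial_t u - \Delta u)\mu\bigr| \le \frac{2C}{I}\sum_x |u|(|u| + |\nabla u|)\mu \le \frac{2C}{I}\bigl(I + I^{1/2}(-ID)^{1/2}\bigr) = 2C\bigl(1 + (-D/I)^{1/2}\bigr) = 2C(1 + (-U)^{1/2})$. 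Here one uses that $-U = -D/I \ge 0$. Step~4: Combine to get $\frac{d}{dt}\log I \ge 2U - 2C(1 + \sqrt{-U}) \ge 2U - 2K - 2K\sqrt{-U}$; estimating $\sqrt{-U} \le 1 + (-U)/2 \le \ldots$ or more crudely bounding in terms of $U(a)$ and $K$, integrate over $[a,b]$ to obtain $\log(I(b)/I(a)) \ge (b-a)\bigl((1+K)(U(a)-1)e^{2(b-a)K^2} - 3K\bigr)$, matching \eqref{Iab}. The constants $(1+K)$ and $-3K$ should drop out of carefully tracking the $2U \ge 2 + 2(U(a)-1)e^{2(b-a)K^2}$ term against the $-2K - 2K\sqrt{-U}$ error, using $\sqrt{-U} \lesssim |U| + 1$ and reorganizing; I would be somewhat loose here since the statement only claims an inequality.

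The main obstacle I anticipate is Step~1 — handling the differential inequality $U' \ge 2C^2(U-1)$ cleanly across the possible sign changes of $U - 1$, since the multiplier $2C^2$ in front of $(U-1)$ only gives a useful (linear-in-$U$) lower bound when we can compare $2C^2$ to a fixed constant, and the inequality $2C^2(U-1) \ge 2K^2(U-1)$ flips direction depending on whether $U - 1$ is positive or negative. The resolution is that for a \emph{lower} bound on $U$ we only care about the region where $U < 1$ (elsewhere $U \ge 1 \ge 1 + (U(a)-1)e^{2K^2(b-a)}$ automatically when $U(a) \le 1$), and there $2C^2(U-1) \ge 2K^2(U-1)$ since $U - 1 < 0$ and $C^2 \le K^2$; so Grönwall applies on that region. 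A secondary nuisance is controlling $\sqrt{-U}$ in Step~4 to land exactly the stated constants, but any sufficiently generous bound (e.g. $\sqrt{-U} \le \frac12(1 + (-U))$ combined with $-U \le 1 - U(a)e^{\cdots}$-type estimates, or simply enlarging $3K$ to absorb it, which the stated form already does) suffices. Once Corollary~\ref{unique} is in hand, Theorem~\ref{thm:backwards_uniqueness} follows immediately: if $u(\cdot, b) \equiv 0$ then $I(b) = 0$, so \eqref{Iab} forces $I(a) = 0$ for every $a < b$ (the exponential factor is finite), hence $u \equiv 0$.
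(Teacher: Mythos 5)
Your overall strategy coincides with the paper's proof: integrate the frequency inequality of Theorem~\ref{U'} by a Gr\"onwall argument (the paper phrases this as $(\log(1-U))'=U'/(U-1)\le 2C^2$) to get the pointwise bound $U(t)\ge 1+(U(a)-1)e^{2(b-a)K^2}$ on $[a,b]$; then differentiate $\log I$, split off the non-caloric error term, bound it by $2C(1+\sqrt{-U})$ via Cauchy--Schwarz, absorb $\sqrt{-U}$ by AM--GM, and integrate. One remark on your Step~1: the case analysis around $U=1$ is vacuous, since with the paper's sign convention $D\le 0$, hence $U=D/I\le 0$ and $U-1\le -1<0$ throughout, so the comparison $2C^2(U-1)\ge 2K^2(U-1)$ holds unconditionally; this is exactly why the paper's logarithmic form is clean.

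The one genuine problem is the factor of two in your Step~2. With the definitions used here, $D=-\sum_x|\nabla u|^2\mu=2\sum_x u\,\Delta u\,\mu$ and $U=D/I$, so $I'=D+2\sum_x u(\partial_t u-\Delta u)\mu$ and therefore $(\log I)'=U+\tfrac{2}{I}\sum_x u(\partial_t u-\Delta u)\mu$; in particular $I'=UI$, not $2UI$, for caloric $u$. If you carry your version through, you get $(\log I)'\ge 2U-2C(1+\sqrt{-U})\ge (2+K)U-3K$ (using $\sqrt{-U}\le\tfrac{1-U}{2}$ and $U\le0$), and after integrating against the pointwise bound on $U$ this yields $\log\bigl(I(b)/I(a)\bigr)\ge (b-a)\bigl[(2+K)\bigl(1+(U(a)-1)e^{2(b-a)K^2}\bigr)-3K\bigr]$, which is strictly weaker than \eqref{Iab} whenever $(U(a)-1)e^{2(b-a)K^2}<-(2+K)$ (the extra copy of the nonpositive quantity $U$ hurts), so being loose with the constants does not rescue the statement as claimed. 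The fix is exactly the paper's computation: $(\log I)'\ge U-2C(1+\sqrt{-U})\ge U-2C-C(1-U)=(1+C)U-3C\ge (1+K)U-3K$, and then integrating with $U(t)\ge 1+(U(a)-1)e^{2(b-a)K^2}$ gives $\log I(b)-\log I(a)\ge (1+K)(b-a)\bigl((U(a)-1)e^{2(b-a)K^2}+1\bigr)-3K(b-a)$, which implies \eqref{Iab}. Your final deduction of Theorem~\ref{thm:backwards_uniqueness} from the corollary is the same as the paper's and is fine.
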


Corollary \ref{unique} infers Theorem \ref{thm:backwards_uniqueness} immediately.
In fact, when $u$ satisfies $\pr{\bd_t - \D}u = cu,$ we have $K=\sup_V c<\infty.$
Thus, \eqref{Iab} implies that if $u(\cdot, b)\equiv 0,$ then $u(\cdot, t)\equiv 0$ for all $t\in[a,b].$
This proves Theorem~\ref{thm:backwards_uniqueness}.

\begin{proof}
    First, we write
$$I' 
= 2\sum_{x \in V} u\D u~\mu(x) + 2\sum_{x \in V} u(\partial_t u-\D u)\mu(x)
= D + 2\sum_{x \in V} u(\partial_t u-\D u)\mu(x),$$
and then estimate, using the Cauchy-Schwarz inequality,
\begin{align*}
(\log I)'
= \frac D I + \frac 2 I \sum_{x \in V} u(\partial_t u-\D u)\mu(x)
\ge U - \frac {2C}I\sum_{x,y \in V} |u|(|u|+|\n u|)\mu(x)
& \ge U - 2C - \frac{2C}{I} \cdot \sqrt{I} \sqrt{-D}
\end{align*}
in which
$$
U - 2C(1+\sqrt{-U})\\
\ge U - 2C\left(1+\frac {-U}2 + \frac 12\right)
\ge (1+C)U -3C,$$
so
$$(\log I)'\ge (1+K)U -3K.$$
As a result,
\begin{equation}\label{logi}
\log I(b) - \log I(a)
\ge (1+K)\int_a^b U(t) dt - 3K(b-a).
\end{equation}
To estimate the first term, note that theorem \ref{U'} implies 
\begin{equation*}
(\log(1-U))'
= \frac{U'}{U-1}
\le 2C^2.
\end{equation*}
Therefore, for $s\in[a,b],$ 
\begin{align*}
\log(1-U(s))-\log(1-U(a))
\le 2\int_a^s C^2 dt
\le 2(b-a)K^2,
\end{align*}
so
$$U(s)
\ge (U(a)-1)e^{2(b-a)K^2}+1.
$$
Plugging this into \eqref{logi}, we obtain
$$
\log I(b) - \log I(a)
\ge (1+K)(b-a)((U(a)-1)e^{2(b-a)K^2}+1)-3K(b-a).
$$
Taking the exponential of both sides then leads to the desired estimate \eqref{Iab}. 
\end{proof}

Finally we have the following equality case when $C=0.$
\begin{cor}
	Let $(V,E,w)$ be a weighted graph and $u\colon V\times[a,b]\to\bb R$ with $u,\partial_t u\in W^{1,2}_\mu(V).$ If $u$ is a solution to the heat equation, then $U'\ge 0.$ Moreover, if $U'\equiv 0,$ then $\D u = \frac U2 u$ and $u(x,t) = e^{\frac U2 (t-a)}u(x,a).$ 
\end{cor}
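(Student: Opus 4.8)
The plan is to revisit the computations in Theorem~\ref{U'} and Corollary~\ref{unique} in the special case $C\equiv 0$, i.e. when $u$ solves $\bd_t u=\D u$ exactly, and extract both the monotonicity $U'\ge 0$ and the rigidity statement from the equality cases of the inequalities used there. First I would note that setting $C=0$ in Theorem~\ref{U'} gives $U'\ge 0$ directly, so the only real content is the rigidity. For that, I would recompute $D'I-I'D$ exactly: when $(\bd_t-\D)u=0$, the identity \eqref{CS} collapses to
\begin{align*}
D'I-I'D
= 4\left(\left(\sum_{x\in V}u^2\mu(x)\right)\left(\sum_{x\in V}(\bd_t u)^2\mu(x)\right)
-\left(\sum_{x\in V}u\,\bd_t u~\mu(x)\right)^2\right),
\end{align*}
which is exactly $4$ times the Cauchy--Schwarz defect for the pair $u,\bd_t u$ in $L^2(V,\mu)$. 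Hence $U'\equiv 0$ forces equality in Cauchy--Schwarz at every time, so $\bd_t u(\cdot,t)=\lambda(t)u(\cdot,t)$ for some scalar $\lambda(t)$ (pointwise proportionality of the two $L^2$-functions), provided $u(\cdot,t)$ is not identically zero — which holds by backward uniqueness (Theorem~\ref{thm:backwards_uniqueness}) as soon as $u$ is nontrivial.

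Next I would identify $\lambda(t)$ with $U(t)/2$. Taking the $\mu$-weighted inner product of $\bd_t u=\lambda u$ with $u$ gives $\tfrac12 I'=\lambda I$, while $\bd_t u=\D u$ gives $\tfrac12 I'=\sum_x u\D u~\mu(x)=\tfrac12 D$ by definition of $D$; combining, $\lambda I=\tfrac12 D$, so $\lambda=\tfrac12 (D/I)=\tfrac12 U$. Substituting back, $\D u=\bd_t u=\tfrac{U}{2}u$ as claimed. Since $U'\equiv 0$, $U$ is a constant, so the ODE $\bd_t u(x,t)=\tfrac{U}{2}u(x,t)$ at each fixed $x$ integrates to $u(x,t)=e^{\frac{U}{2}(t-a)}u(x,a)$, which is the final assertion.

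One technical point to handle carefully: the step "equality in Cauchy--Schwarz $\Rightarrow$ proportionality" requires the non-vanishing of $u(\cdot,t)$ in $L^2(V,\mu)$ for each $t\in[a,b]$, and that the proportionality constant is genuinely the \emph{same} function $\lambda(t)$ one gets from $\tfrac12 I'=\lambda I$; this is automatic once one knows $u\not\equiv 0$ on each slice. For a nontrivial $u$ solving the heat equation, if $I(t_0)=0$ for some $t_0$ then $u(\cdot,t_0)\equiv 0$, and backward uniqueness applied on $[a,t_0]$ would force $u\equiv 0$ on $[a,t_0]$; a short forward-uniqueness remark (or simply restricting attention to the maximal subinterval where $I>0$ and then using continuity) closes this gap. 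I expect this non-degeneracy bookkeeping to be the only mildly delicate part; the algebra is a direct specialization of what is already done in Theorem~\ref{U'} and Corollary~\ref{unique}.
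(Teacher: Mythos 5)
Your proposal is correct and follows essentially the same route as the paper: $U'\ge 0$ comes from Theorem~\ref{U'} with $C=0$, rigidity comes from the equality case of the Cauchy--Schwarz step in \eqref{CS}, which forces $\partial_t u=\Delta u=c(t)u$, the constant is identified as $c=U/2$ via $D=2\sum_{x\in V}u\Delta u~\mu(x)=2cI$, and the ODE is then integrated in $t$ for each fixed $x$. Your additional remark about the non-vanishing of $u(\cdot,t)$ on each time slice (needed both for $U$ to be defined and to pass from Cauchy--Schwarz equality to proportionality) is a sensible refinement of a point the paper leaves implicit.
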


\begin{proof}
The first conclusion directly follows from theorem \ref{U'}. When $U'\equiv 0$ and $\bd_t u = \D u,$ the inequality \eqref{CS} is an equality. Since the inequality \eqref{CS} follows from the Cauchy-Schwarz inequality, the equality holds if and only if
$$\D u = \partial_t u=c(t)u$$
for some $c(t)\in\bb R.$ Consequently,
$$D=2\sum_{x \in V} u\D u~\mu(x)
= 2c\sum_{x \in V} u^2\mu(x) = 2cI,
$$
which implies
$$c = \frac D{2I} = \frac U2$$
is also constant. Hence 
$$\D u = \partial_t u = cu=\frac{U}{2}u.$$
Based on this, we observe that
$$\bd_t\left(e^{-\frac U2 t} u(x,t) \right)
= e^{-\frac U2 t}\left(-\frac U2 u + \bd_t u\right) =0
$$
and hence we get
$$e^{-\frac U2 t} u(x,t)=e^{-\frac U2 a} u(x,a)$$
for all $t\in[a,b],$ so the conclusion follows.
\end{proof}

\end{document}